\newcommand{\bd}{\begin{description}}
\newcommand{\ed}{\end{description}}
\newcommand{\bi}{\begin{itemize}}
\newcommand{\ei}{\end{itemize}}
\newcommand{\bt}{\begin{tabular}}
\newcommand{\et}{\end{tabular}}
\newcommand{\beq}{\begin{equation}}
\newcommand{\eeq}{\end{equation}}
\newcommand{\be}{\begin{enumerate}}
\newcommand{\ee}{\end{enumerate}}
\newcommand{\beqs}{\begin{eqnarray*}}
\newcommand{\eeqs}{\end{eqnarray*}}
\definecolor{DarkGreen}{rgb}{0.2, 0.6, 0.3}
\newtheorem{theorem}{Theorem}
\newtheorem{lemma}{Lemma}
\newtheorem{corollary}[theorem]{Corollary}
\newtheorem{case}{Case}
\newtheorem{claim}{Claim}
\newtheorem{problem}{Problem}
\newtheorem{proposition}{Proposition}
\newtheorem{conjecture}{Conjecture}
\newcommand{\labelz}[1]{\label{#1}}
\newcommand{\ch}{\mathscr{C}}
\begin{document}
\title{\bf Gallai-Ramsey numbers for rainbow paths\footnote{Research partially supported by National Natural Science Foundation of China (No. 11871398)}}
\author{Xihe Li\footnote{Department of Applied Mathematics, School of Science, Northwestern Polytechnical University, Xi'an, Shaanxi 710072, P.~R.~China.} \footnote{Xi'an-Budapest Joint Research Center for Combinatorics, Northwestern Polytechnical University, Xi'an, Shaanxi 710129, P.~R.~China.}, Pierre Besse\footnote{Department of Mathematics, Clayton State University, Morrow, GA 30260, USA.}, Colton Magnant\footnotemark[4],\\ Ligong Wang\footnotemark[2] \footnotemark[3], Noah Watts\footnotemark[4]}
\maketitle

\begin{abstract}
Given graphs $G$ and $H$ and a positive integer $k$, the \emph{Gallai-Ramsey number}, denoted by $gr_{k}(G : H)$ is defined to be the minimum integer $n$ such that every coloring of $K_{n}$ using at most $k$ colors will contain either a rainbow copy of $G$ or a monochromatic copy of $H$. We consider this question in the cases where $G \in \{P_{4}, P_{5}\}$. In the case where $G = P_{4}$, we completely solve the Gallai-Ramsey question by reducing to the $2$-color Ramsey numbers. In the case where $G = P_{5}$, we conjecture that the problem reduces to the $3$-color Ramsey numbers and provide several results in support of this conjecture.
\end{abstract}

\section{Introduction}

In this work, we only consider edge colorings of graphs. A colored graph is called \emph{rainbow} if all edges have different colors and \emph{monochromatic} if all edges have a single color. Given a graph $G$, the $k$-color Ramsey number for $G$, denoted by $R_{k}(G)$, is the minimum integer $n$ such that every coloring of $K_{n}$ using at most $k$ colors will contain a monochromatic copy of $G$ in some color. Given graphs $G$ and $H$ and a positive integer $k$, the \emph{Gallai-Ramsey number}, denoted by $gr_{k}(G : H)$ is defined to be the minimum integer $n$ such that every coloring of $K_{n}$ using at most $k$ colors will contain either a rainbow copy of $G$ or a monochromatic copy of $H$. Other standard notation can be found in \cite{CLZ11}.

Recently, there have been many results concerning the case where $G$ is a triangle. We refer the interested reader to the survey \cite{MR2606615} with a dynamically updated version available at \cite{FMO14}. Other choices for $G$ have been much less studied so we consider the case where $G$ is a path. For short paths, the structure of colored complete graphs containing no rainbow path is well understood (see Theorems~\ref{Thm:P4Classify} and~\ref{Thm:P5Classify}).

When $G = P_{4}$, the structure is extremely strong, yielding the following result.

\begin{theorem}\labelz{Thm:P4Reduction}
For any graph $H$ with no isolated vertices, we have
$$
gr_{k}(P_{4} : H) = R_{2}(H)
$$
except when $H = P_{3}$ and $k \geq 3$, in which case
$$
gr_{k}(P_{4} : P_{3}) = 5.
$$
\end{theorem}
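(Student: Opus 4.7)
The plan is to attack both directions using the structural classification in Theorem~\ref{Thm:P4Classify}. For the \textbf{lower bound} $gr_{k}(P_{4} : H) \geq R_{2}(H)$, I would take an extremal $2$-coloring of $K_{R_{2}(H)-1}$ having no monochromatic copy of $H$; since only two colors appear, no $P_{4}$ in this coloring can be rainbow, and this serves as a valid $k$-colored witness. To establish the separate lower bound $gr_{k}(P_{4} : P_{3}) \geq 5$ for $k \geq 3$, I would exhibit the proper $3$-edge-coloring of $K_{4}$, i.e., its decomposition into three perfect matchings: each color class is a matching of size $2$, so there is no monochromatic $P_{3}$, and a direct case-check of the Hamiltonian paths of $K_{4}$ confirms that none of them is rainbow.

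For the \textbf{upper bound}, I would take an arbitrary $k$-coloring $c$ of $K_{n}$, with $n = R_{2}(H)$ in the generic case and $n = 5$ in the $P_{3}$ exception, that contains no rainbow $P_{4}$, and feed it into Theorem~\ref{Thm:P4Classify}. The expected force of the classification is that, apart from a short list of exceptional configurations on very few vertices, $c$ must use at most two colors. In the generic case where $R_{2}(H) \geq 5$, those small exceptions cannot appear on the host $K_{n}$, so $c$ uses at most two colors, and the definition of $R_{2}(H)$ immediately produces a monochromatic $H$. For the $P_{3}$ exception with $n = 5$, the same reduction yields at most two colors on $K_{5}$; then some color class has at least $\lceil 10/2 \rceil = 5$ edges, and since a matching in $K_{5}$ has at most $2$ edges, that color class cannot itself be a matching and must contain a $P_{3}$.

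The main obstacle is identifying precisely which small-$n$ exceptional colorings Theorem~\ref{Thm:P4Classify} permits, and then verifying that the only graphs $H$ without isolated vertices whose Ramsey number $R_{2}(H)$ is small enough to collide with those exceptions are $K_{2}$ (trivial) and $P_{3}$ (the stated exception). Once this dichotomy is pinned down, the remaining work---routine counting on $K_{5}$ for the $P_{3}$ case, together with the standard Ramsey lower-bound constructions---is straightforward.
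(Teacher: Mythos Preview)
Your proposal is correct and follows essentially the same route as the paper: both directions are handled via Theorem~\ref{Thm:P4Classify}, with the lower bounds coming from the extremal $2$-coloring and the $3$-matching decomposition of $K_{4}$, and the upper bound reducing to the $2$-color case once the exceptional $K_{4}$ configuration is ruled out by checking that $R_{2}(H) \geq 5$ whenever $H \notin \{P_{2}, P_{3}\}$. The only cosmetic difference is that for the $P_{3}$ upper bound on $K_{5}$ the paper simply invokes $R_{2}(P_{3}) = 3 \leq 5$ after reducing to two colors, whereas you give the pigeonhole/matching count; both arguments are one line.
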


Note that the restriction of isolated vertices is simply to eliminate trivial case analysis and can be avoided by ensuring there are enough remaining vertices.

Theorem~\ref{Thm:P4Reduction} actually completes the classification that was begun in \cite{MR3539092} with the following result.

\begin{theorem}[\cite{MR3539092}]
For every graph $H$ of order $n \geq 5$, $gr_{k}(P_{4} : H) = R_{2}(H)$.
\end{theorem}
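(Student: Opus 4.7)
The plan has three parts: a universal lower bound, a generic upper bound via the structural classification Theorem~\ref{Thm:P4Classify}, and a direct analysis of the small exceptional case $H = P_3$.

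For the lower bound $gr_k(P_4 : H) \geq R_2(H)$, observe that an extremal $2$-coloring of $K_{R_2(H)-1}$ avoiding a monochromatic $H$ uses only two colors and hence, since a rainbow $P_4$ requires three distinct colors, is automatically rainbow-$P_4$-free; viewed as a coloring with at most $k \geq 2$ colors, it is a valid witness. For the upper bound $gr_k(P_4 : H) \leq R_2(H)$, I would invoke Theorem~\ref{Thm:P4Classify}: in any $k$-coloring of $K_n$ with no rainbow $P_4$, either at most two colors appear on the whole graph, or (essentially) there is a distinguished ``center'' vertex $v$ whose removal leaves a coloring of $K_n - v$ that uses only two colors. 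In the first scenario, Ramsey's theorem immediately yields a monochromatic $H$ in $K_n$ when $n \geq R_2(H)$. In the second, applying Ramsey to $K_n - v$ (provided $n - 1 \geq R_2(H)$) produces a monochromatic $H$ inside $K_n - v$; since $H$ has no isolated vertices, that copy already sits in $K_n$ without needing $v$. The residual gap of one vertex is closed either by sharpening the classification so the center vertex is absorbed into a two-colored partition, or by delegating to the prior theorem of \cite{MR3539092} for $|V(H)| \geq 5$ and dispatching the remaining small $H$ (other than $P_3$) by direct verification.

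The exceptional case $H = P_3$ with $k \geq 3$ requires an independent argument because $R_2(P_3) = 3$ is too small for the reduction above. For the lower bound $gr_k(P_4 : P_3) \geq 5$, I use the proper $3$-edge-coloring of $K_4$ (three perfect matchings): no color class contains a $P_3$, and every $P_4$ in $K_4$ has outer edges that form a perfect matching and hence share a color, so its color pattern is $(a,b,a)$ and it is not rainbow. For the matching upper bound $gr_k(P_4 : P_3) \leq 5$, assume a $k$-coloring of $K_5$ avoids monochromatic $P_3$; then every color class is a matching. Since $K_5$ has $10$ edges and matchings in $K_5$ have size at most $2$, at least five distinct colors appear, and at any vertex $v$ the four incident edges have four distinct colors. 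A short case analysis on the color of a single non-incident edge $u_iu_j$ then exhibits a rainbow $P_4$ of the form $u_i$--$v$--$u_\ell$--$u_m$ or $u_i$--$u_j$--$v$--$u_\ell$.

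The main obstacle is the second step: reconciling the single-vertex loss in Theorem~\ref{Thm:P4Classify} with the sharp bound $R_2(H)$. When $H$ is sufficiently large this slack is negligible and the reduction is clean, but it must be argued carefully, either by extracting finer information about the center vertex in the structural dichotomy or by handling the handful of small $H$ by hand. The $P_3$ exception is precisely the trace of this difficulty at its smallest non-trivial scale, where no such absorption is possible and the Ramsey target $R_2(P_3) = 3$ is genuinely superseded by $5$.
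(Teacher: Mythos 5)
Your lower bound is exactly the paper's: a $2$-coloring of $K_{R_2(H)-1}$ with no monochromatic $H$ uses only two colors and hence cannot contain a rainbow $P_4$. The difficulty lies in your upper bound, which rests on a misquotation of the structural tool. Theorem~\ref{Thm:P4Classify} has no ``center vertex'' alternative: for a rainbow-$P_4$-free coloring of $K_n$ with $n\geq 4$, the only cases are (a) at most two colors are used, or (b) $n=4$ and the coloring is the $3$-edge-coloring of $K_4$ into perfect matchings. (The vertex-deletion scenario you describe is case~(c) of the rainbow $P_5$ classification, Theorem~\ref{Thm:P5Classify}, not anything in the $P_4$ classification.) Since $|H|\geq 5$ forces $R_2(H)\geq |H|\geq 5>4$, case~(b) simply cannot occur at $n=R_2(H)$; case~(a) plus the definition of $R_2(H)$ finishes the proof in one line, which is precisely the paper's argument. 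There is no ``residual gap of one vertex'' to close, and the two escape routes you propose for this phantom obstacle are, respectively, vague (``sharpening the classification'') and circular (``delegating to the prior theorem of \cite{MR3539092}'' --- that prior theorem \emph{is} the statement you were asked to prove).

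Your treatment of $H=P_3$ is self-contained and essentially correct (every color class a matching forces at least five colors on $K_5$, whence a rainbow $P_4$), but it is outside the scope of the stated theorem, which concerns only graphs of order at least $5$; in the paper that case belongs to the more general Theorem~\ref{Thm:P4Reduction}. The net effect is that your proposal spends most of its effort on a nonexistent case while the actual proof of the statement is the two sentences you wrote for the ``first scenario.''
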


When $G = P_{5}$, the structure is not quite as strong as in the $P_{4}$ case but we believe the following to be true.

\begin{conjecture}\labelz{Conj:P5Reduction}
For any graph $H$ with no isolated vertices, we have
$$
gr_{k}(P_{5} : H) = R_{3}(H).
$$
\end{conjecture}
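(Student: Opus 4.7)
The lower bound $gr_k(P_5 : H) \geq R_3(H)$ is immediate and requires no structural work. A rainbow $P_5$ has four edges and therefore needs four distinct colors, so any $3$-coloring of $K_{R_3(H)-1}$ that witnesses the extremality of $R_3(H)$ contains no rainbow $P_5$ and no monochromatic $H$; lifting this coloring to a (formally) $k$-coloring by not using the extra colors establishes the inequality for every $k \geq 3$.

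The real work is the upper bound, for which the plan is induction on the number of colors $k$, using Theorem~\ref{Thm:P5Classify} as the structural engine. The base case $k \leq 3$ is precisely the definition of $R_3(H)$. For the inductive step, suppose $c$ is a $k$-coloring of $K_n$ with $n \geq R_3(H)$, no rainbow $P_5$, and $k \geq 4$. I would invoke the classification of rainbow-$P_5$-free colorings to identify either (i) a large induced subgraph on which $c$ uses at most three colors, in which case the base case finishes, or (ii) a distinguished color $\alpha$ whose edges are forced into a very restricted configuration (for example, sitting inside a small vertex set, forming a spanning subgraph of bounded structure, or being separable from the rest of the palette). In case (ii), I would remove or recolor the $\alpha$-edges to produce an auxiliary $(k-1)$-coloring that still avoids rainbow $P_5$, apply induction to locate a monochromatic $H$ in some color $\beta \neq \alpha$, and argue that $\beta$-copies survive the modification by a short absorption/Ramsey argument tuned to the structural type returned by Theorem~\ref{Thm:P5Classify}.

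The hard part is the reduction embodied in case (ii). In the $P_4$ analogue (Theorem~\ref{Thm:P4Reduction}) the classification is so rigid that essentially one or two colors dominate and the induction is almost automatic; for $P_5$ this is decisively false — three colors can be mixed freely and additional colors can be layered on top of certain structured subgraphs, so a rainbow-$P_5$-free coloring need not be close to a $3$-coloring. Consequently the inductive step must be sensitive to which structural type in Theorem~\ref{Thm:P5Classify} actually occurs, and different types will demand different absorption arguments depending on $H$. I would expect small graphs $H$ (short paths, stars, triangles, perhaps $K_4$) to produce genuine exceptions, in the spirit of the $gr_k(P_4:P_3)=5$ anomaly, so part of the program must be to isolate these cases and verify the conjecture for them by hand.

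Finally, I would expect that uniform progress on the conjecture is possible by first proving it under a minimum-order hypothesis on $H$ (so that the structural cases collapse because $H$ is too large to fit in the exceptional pieces), then attacking families of $H$ — trees, bipartite graphs, complete graphs — where $R_3(H)$ is better understood. The fact that the excerpt announces the statement as a conjecture rather than a theorem strongly suggests that no single clean reduction currently covers all $H$, and that the main obstacle — parlaying the relatively weak $P_5$-rainbow-free structure into a color reduction that respects the target graph $H$ — is exactly what resists a uniform proof.
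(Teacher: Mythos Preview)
The statement is a conjecture, and the paper does not prove it; it only establishes the lower bound (exactly as you do) and then proves the upper bound for several families of $H$. So there is no ``paper's own proof'' to match, and you correctly diagnose this in your final paragraph. Your lower-bound argument is identical to the paper's.

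Where your plan and the paper's actual partial progress diverge is in the reduction strategy. You propose induction on $k$: peel off a distinguished color, recolor, and absorb. The paper does \emph{not} induct on $k$. Instead its first move (Lemma~\ref{Lem:P5Disconnected+b}) is to run through all six cases of Theorem~\ref{Thm:P5Classify} once and for all and observe that cases (a),(c),(d),(e),(f) each force a monochromatic $K_{n-3}$ or use only three colors, which already suffices for every $H$ (including the small ones $P_3$, $2P_2$ that you flagged as potential exceptions --- none survive). This leaves only case~(b), and crucially the same lemma shows that in case~(b) the non-dominant colors induce vertex-disjoint pieces, so merging them into one color cannot create a monochromatic \emph{connected} $H$; hence any counterexample $H$ must be disconnected. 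This single reduction is much sharper than your case~(ii) sketch and replaces the induction entirely.

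From that point the paper's progress is organized around disconnected $H$ in the case~(b) structure, via two devices you did not anticipate: a balanced-bipartition argument for bipartite $H$ (Lemma~\ref{Lem:Bip}), and the inequality $R_3(H)\geq R_2(\ch(H))$ where $\ch(H)$ is the set of connected supergraphs of $H$ (Lemma~\ref{Lem:CH}). The latter is the workhorse for $mK_3$, $mC_5$, $nK_r$, and $2G$. Your ``attack families of $H$'' suggestion is in the right spirit, but the concrete mechanism --- bounding $R_2(\ch(H))$ below $R_3(H)$ --- is the missing idea in your outline.
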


As seen in Theorem~\ref{Thm:P4Reduction}, there may be one or more exceptional graphs but since our partial results in support of this conjecture eliminate many of the most natural candidates, we feel this conjecture is likely to be true in its stated form.

For the lower bound, the sharpness example for $R_{3}(H)$ is a $3$-colored complete graph containing no monochromatic copy of $H$. This trivially contains no rainbow copy of $P_{5}$ since only three colors are used. It therefore suffices to prove (or disprove) the upper bound in Conjecture~\ref{Conj:P5Reduction}.

This paper is laid out as follows. In Section~\ref{Sec:Prelim}, we review several preliminary results that will be used later in the proofs. These include the aforementioned structural characterizations of graphs with no rainbow small paths. In Section~\ref{Sec:P4}, we prove Theorem~\ref{Thm:P4Reduction}. Finally, Section~\ref{Sec:P5} contains several results in support of Conjecture~\ref{Conj:P5Reduction}.

\section{Preliminaries}\labelz{Sec:Prelim}

We first state the main structural tools that will be used in out proofs. These provide strong structure when short rainbow paths are forbidden.

\begin{theorem}[\cite{TW07}]\labelz{Thm:P4Classify}
Let $K_n, n\geq 4$, be edge colored such that it contains no rainbow $P_4$. Then one of the following holds:
\bd
\item{\rm (a)} at most two colors are used;
\item{\rm (b)} $n = 4$ and the graph is precisely the $3$-coloring of $K_{4}$ in which each color induces a matching.
\ed
\end{theorem}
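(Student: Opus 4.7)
The plan is to suppose the coloring uses at least three colors and show that $n = 4$ with the matching $3$-coloring. The two steps are (i) produce a vertex $v$ incident with edges of three distinct colors, and then (ii) use short $P_4$'s through $v$ to pin down every remaining edge and rule out $n \geq 5$.

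For (i), I would argue that some vertex has \emph{color degree} at least $3$. Suppose not; then every vertex has color degree at most $2$. Pick any vertex $v$ and a color $\gamma$ missing at $v$; since $\gamma$ appears on no edge at $v$, there is a $\gamma$-edge $wx$ with $w,x \neq v$. For any $a \in V \setminus \{v,w,x\}$ (which exists since $n \geq 4$), the path $a$-$v$-$w$-$x$ has its last two edges already colored with the distinct colors $c(vw)$ and $\gamma$, so avoiding a rainbow $P_4$ forces $c(av) \in \{c(vw), \gamma\}$; since $v$ does not see $\gamma$ this gives $c(av) = c(vw)$. The same argument along $a$-$v$-$x$-$w$ yields $c(vx) = c(vw)$, so every edge at $v$ has color $c(vw)$. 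Hence every vertex has color degree $1$, and a short connectivity argument (every pair of adjacent vertices must share its unique incident color) forces $K_n$ to be monochromatic, contradicting the use of three colors.

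For (ii), I would pick $v$ with edges $va,vb,vc$ of distinct colors $\alpha,\beta,\gamma$. The two $P_4$'s $a$-$v$-$b$-$c$ and $a$-$v$-$c$-$b$ force
\[
c(bc) \in \{\alpha,\beta\} \cap \{\alpha,\gamma\} = \{\alpha\},
\]
and the symmetric pairs give $c(ac) = \beta$ and $c(ab) = \gamma$. Thus the induced $K_4$ on $\{v,a,b,c\}$ is exactly the matching $3$-coloring of (b). If $n \geq 5$, any fifth vertex $d$ would yield three $P_4$'s $d$-$v$-$a$-$b$, $d$-$v$-$b$-$c$, $d$-$v$-$c$-$a$ forcing $c(dv)$ to lie in $\{\alpha,\gamma\}$, $\{\alpha,\beta\}$, and $\{\beta,\gamma\}$ respectively, whose intersection is empty---a contradiction. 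I expect the main obstacle to be step (i): the case in which every vertex sees only two colors does not yield an immediate rainbow $P_4$, and the key trick is to reach across $v$ into a $\gamma$-edge disjoint from $v$ to exploit a pair of colors that is already distinct. Once a vertex of color degree $3$ is in hand, the rest is inspection.
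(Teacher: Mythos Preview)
The paper does not prove this theorem; it is quoted from \cite{TW07} as a structural tool and used without proof. So there is no ``paper's own proof'' to compare against here.

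That said, your argument is correct and self-contained. Step~(ii) is entirely routine once a vertex of color degree~$3$ is available, and your execution is clean: the two $P_4$'s through $v$ pin down each opposite edge of the $K_4$, and the three constraints on $c(dv)$ have empty intersection, ruling out $n\ge 5$. Step~(i) is the only place requiring thought, and your idea works: fixing an arbitrary $v$ that misses a color $\gamma$, reaching to a $\gamma$-edge $wx$ disjoint from $v$, and running the two paths $a\text{-}v\text{-}w\text{-}x$ and $a\text{-}v\text{-}x\text{-}w$ forces $c(av)=c(vw)=c(vx)$ for every fourth vertex $a$, so $v$ has color degree~$1$. Since $v$ was arbitrary, every vertex has color degree~$1$, and connectivity of $K_n$ makes the coloring monochromatic, a contradiction. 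One small cosmetic point: you might make explicit that $c(vw)\neq\gamma$ and $c(vx)\neq\gamma$ (both because $v$ misses $\gamma$) so that the intersection argument in step~(i) goes through cleanly, but you do invoke this implicitly and the logic is sound.
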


For this next result, given a color $j$, let $V^{(j)}$ be the set of vertices with at least one incident edge in color $j$ and let $E^{(j)}$ be the set of edges of color $j$.

\begin{theorem}[\cite{TW07}]\labelz{Thm:P5Classify}
Let $K_n, n\geq 5$, be edge colored such that it contains no rainbow $P_5$. Then after renumbering the colors, one of the following holds:
\bd
\item{\rm (a)} at most three colors are used;
\item{\rm (b)} color $1$ is dominant, meaning that the sets $V^{(j)}, j\geq 2$, are disjoint;
\item{\rm (c)} $K_n - v$ is monochromatic for some vertex $v$;
\item{\rm (d)} there exist three special vertices $v_1, v_2, v_3$ such that $E^{(2)}=\{v_1v_2\}, E^{(3)}=\{v_1v_3\}, E^{(4)}$ contains $v_2v_3$ plus perhaps some edges incident with $v_1$, and every other edge is in $E^{(1)}$;
\item{\rm (e)} there exist four special vertices $v_1, v_2, v_3, v_4$ such that $\{v_1v_2\} \subset E^{(2)} \subset \{v_1v_2, v_3v_4\}$, $E^{(3)}=\{v_1v_3, v_2v_4\}$, $E^{(4)}=\{v_1v_4, v_2v_3\}$, and every other edge is in $E^{(1)}$;
\item{\rm (f)} $n=5$, $V(K_n)=\{v_1, v_2, v_3, v_4, v_5\}$, $E^{(1)}=\{v_1v_4, v_1v_5, v_2v_3\}$, $E^{(2)}=\{v_2v_4, v_2v_5, v_1v_3\}$, $E^{(3)}=\{v_3v_4, v_3v_5, v_1v_2\}$ and $E^{(4)}=\{v_4v_5\}$.
\ed 
\end{theorem}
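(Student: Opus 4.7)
The strategy is to use Theorem~\ref{Thm:P4Classify} as a local tool and then force the global structure by analyzing the extension behavior of a single carefully chosen rainbow $P_4$. As a first step, assume at least four colors appear in the coloring; otherwise case (a) holds immediately. Because $n \geq 5 > 4$, Theorem~\ref{Thm:P4Classify} applied to $K_n$ itself rules out the $4$-vertex exceptional coloring there, so the absence of a rainbow $P_4$ would force at most two colors, contradicting the assumption. Hence some rainbow $P_4$ exists; fix one, say $P = v_1 v_2 v_3 v_4$ with $c(v_1v_2) = 2$, $c(v_2v_3) = 3$, $c(v_3v_4) = 4$, and reserve the label ``color $1$'' (and possibly further indices) for anything used outside the three colors appearing on $P$.

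The central forcing observation is that for each vertex $u$ outside $V(P)$, the edges $uv_1$ and $uv_4$ must carry colors from $\{2,3,4\}$, since otherwise prepending or appending $u$ to $P$ creates a rainbow $P_5$. The same principle applied to the other rainbow $P_4$s supported on $V(P)$ --- for instance $v_2 v_1 v_3 v_4$ when $c(v_1v_3) \notin \{2,4\}$, or the analogous rerouting using the chord $v_2v_4$ --- pins down the colors of edges from $V(K_n) \setminus V(P)$ to the interior vertices $v_2, v_3$ and simultaneously constrains the three non-path edges among $\{v_1v_3, v_2v_4, v_1v_4\}$. Iterated, color $1$ is forced onto a very restricted edge set, typically edges lying inside $V(K_n) \setminus \{v_1, v_4\}$, and this restriction becomes the kernel of every subsequent case split.

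The final step is a case analysis on where color $1$ (and any further colors beyond $\{1,2,3,4\}$) may appear. If color $1$ is incident to a single vertex $v$, removing $v$ leaves a subgraph on colors $\{2,3,4\}$, which after further refinement yields case (b), with color $1$ dominant and the remaining colors separated into disjoint vertex classes, or case (c), in which $K_n - v$ is monochromatic. If instead color $1$ appears only on a matching-like pattern on a small ``special'' vertex set, the endpoint constraints established above collapse the possibilities onto the rigid configurations described in (d) and (e), distinguished by whether the special set has three or four vertices; the exceptional coloring (f) arises only for $n = 5$, where the very tight interactions among four colors on five vertices force the rotation-symmetric pattern.

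The main obstacle will be the bookkeeping in this last step: verifying that the constraints deduced from \emph{all} rainbow $P_4$s simultaneously admit no configuration outside of (b)--(f). My intended remedy is to select the initial rainbow $P_4$ extremally --- maximizing, for example, the number of distinct colors appearing on edges incident to $V(P)$, or minimizing the number of external vertices whose edges to $V(P)$ use a color outside $\{2,3,4\}$ --- so that every competing rainbow $P_4$ inherits compatible endpoint constraints and the analysis reduces to a finite check. Direct verification of the small cases $n = 5, 6$ handles any potential sporadic configurations around case (f), while for larger $n$ the uniform structural arguments above should suffice to land in (b)--(e).
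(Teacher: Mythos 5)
This theorem is quoted from \cite{TW07}; the paper under review does not prove it, so there is no in-paper argument to compare against, and your proposal has to stand on its own. Its opening moves are sound: if at least four colors appear then Theorem~\ref{Thm:P4Classify} (with $n\geq 5$) does force a rainbow $P_4$, and the observation that any external vertex $u$ must have $c(uv_1), c(uv_4) \in \{2,3,4\}$ to block the extensions $uv_1v_2v_3v_4$ and $v_1v_2v_3v_4u$ is correct and is indeed the right engine for such a proof. But everything after that is assertion rather than argument. The entire content of the theorem is the exact list (b)--(f), and your text never derives any of these configurations: phrases like ``typically edges lying inside $V(K_n)\setminus\{v_1,v_4\}$,'' ``after further refinement yields case (b),'' and ``collapse the possibilities onto the rigid configurations described in (d) and (e)'' name the destination without supplying the case analysis that gets there. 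The rerouting step also silently branches: the path $v_2v_1v_3v_4$ only gives a constraint when $c(v_1v_3)\notin\{2,4\}$, and you do not say what replaces it when $c(v_1v_3)\in\{2,4\}$ --- yet those degenerate branches are precisely where the sporadic cases (d), (e), (f) live. Deferring all of this to an unexecuted ``extremal choice of $P$ plus finite check'' leaves the proof essentially unwritten.

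There is also a concrete sign that the target cases have been misread, which would derail the intended finite check. You write that if color $1$ is incident to a single vertex $v$ then one lands in case (b) ``with color $1$ dominant.'' In case (b) the dominant color is the one appearing on \emph{all} edges between the parts $V^{(j)}$, so it is incident to essentially every vertex; a color confined to one vertex cannot be the dominant one. Likewise, case (c) says $K_n - v$ is monochromatic, i.e.\ all colors \emph{except} the majority color are confined near $v$, which is the opposite of the situation you describe. Establishing case (b) in particular requires showing that no vertex meets two distinct non-dominant colors (the disjointness of the $V^{(j)}$), and nothing in your outline addresses how that disjointness is forced. As it stands the proposal is a plausible strategy sketch in the spirit of \cite{TW07}, but not a proof.
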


More generally, let $\mathscr{G}$ be a non-empty set of graphs. Let $R_2(\mathscr{G})$ be the minimum number of vertices $n$ such that in every $2$-coloring of $K_{n}$, there is a monochromatic copy of some graph in $\mathscr{G}$. More specifically, for two sets of graphs $\mathscr{G}_{1}$ and $\mathscr{G}_{2}$, let $R(\mathscr{G}_{1}, \mathscr{G}_{2})$ be the minimum number of vertices $n$ such that in every red-blue coloring of $K_{n}$, there is either a red copy of a graph in $\mathscr{G}_{1}$ or a blue copy of a graph in $\mathscr{G}_{2}$. If either set consists of a single graph, the notation will be simplified to just be the graph, e.g.~$R(\mathscr{G}, G)$.

Given a bipartite graph $H = A \cup B$ say with $|A| \geq |B|$, let $b(H) = |A|$ denote the order of the bigger side of $H$ and let $s(H) = |B|$ denote the smaller side of $H$.

\section{Proof of Theorem~\ref{Thm:P4Reduction}}\labelz{Sec:P4}

In this section, we provide the straightforward proof of Theorem~\ref{Thm:P4Reduction}. As simple as this proof is, it provides an introduction to some of the strategies that will be used in our later results.

\begin{proof}
For the lower bound, the sharpness example for $R_{2}(H)$ is a $2$-colored complete graph on $R_{2}(H) - 1$ vertices containing no monochromatic copy of $H$. This trivially also contains no rainbow copy of $P_{4}$ since only two colors are used. In the special case when $H = P_{3}$, we have $R_{2}(P_{3}) = 3$ but the graph described in Case~(b) of Theorem~\ref{Thm:P4Classify}, the $3$-coloring of $K_{4}$ in which each color induces a matching, contains no rainbow copy of $P_{4}$ and no monochromatic copy of $P_{3}$.

For the upper bound, we consider a coloring $G$ of $K_{n}$ where $n = R_{2}(H)$ which contains no rainbow copy of $P_{4}$. By Theorem~\ref{Thm:P4Classify}, there are only two possible cases for what this coloring can look like. If Case~(a) holds, then $G$ uses only two colors and there is a monochromatic copy of $H$ in $G$ by the definition of $R_{2}(H)$. On the other hand, if Case~(b) holds, then $n = 4$, which is a contradiction unless $H \in \{P_{2}, P_{3}\}$ since $R_{2}(H) > 4$ for any other graph $H$.

If $H = P_{2}$, then trivially $gr_{k}(P_{4} : P_{2}) = R_{2}(P_{2}) = 2$. If $H = P_{3}$, then the $3$-coloring of $K_{4}$ in which each color induces a matching contains no rainbow $P_{4}$ and no monochromatic copy of $P_{3}$. By Theorem~\ref{Thm:P4Classify}, for $k \geq 3$, there is no $k$-coloring of $K_{n}$ with $n \geq 5$ which does not contain a rainbow $P_{4}$. This means that for $k \geq 3$, we have $gr_{k}(P_{4} : P_{3}) = 5$.
\end{proof}

\section{Rainbow $P_{5}$}\labelz{Sec:P5}

In this section, we prove several results in support of Conjecture~\ref{Conj:P5Reduction}. For the sake of notation, let $\mathscr{H}$ be the set of graphs $H$ for which $gr_{k}(P_{5} : H) > R_{3}(H)$, those that do not satisfy Conjecture~\ref{Conj:P5Reduction}. Indeed, Conjecture~\ref{Conj:P5Reduction} claims that the set $\mathscr{H}$ is empty.

\begin{lemma}\labelz{Lem:P5Disconnected+b}
Every graph $H \in \mathscr{H}$ is disconnected. Furthermore, in order for a $k$-colored complete graph containing no rainbow $P_{5}$ and no monochromatic copy of $H$ to have more than $R_{3}(H)$ vertices, it must satisfy Case~(b) of Theorem~\ref{Thm:P5Classify}.
\end{lemma}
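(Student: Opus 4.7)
The plan is to take an arbitrary $k$-coloring of $K_{n}$ with no rainbow $P_{5}$ and no monochromatic $H$ and pass it through the classification in Theorem~\ref{Thm:P5Classify}, showing that every case other than~(b) is incompatible with the hypothesis. The ``furthermore'' clause corresponds to the assumption $n > R_{3}(H)$, while the first assertion corresponds to $n = R_{3}(H)$ together with $H$ connected. Throughout one may assume $n \geq 5$ so that Theorem~\ref{Thm:P5Classify} applies, the case $H = P_{2}$ being trivial since any edge is a monochromatic $P_{2}$.

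First I would dispose of Case~(a) by invoking $R_{3}(H)$ directly: at most three colors are used and $n \geq R_{3}(H)$, so a monochromatic $H$ already appears. For Cases~(c) and~(d), the classification exhibits a set $S$ of at most two ``special'' vertices whose deletion leaves a subgraph that, in one fixed color, is complete on $n - |S|$ vertices except for at most one edge; $H$ embeds in this subgraph as long as $|V(H)| \leq n - |S|$ and $H$ is not the complete graph $K_{n - |S|}$. The first inequality follows from $R_{3}(H) \geq |V(H)|$ (strictly, $R_{3}(H) \geq |V(H)| + 1$ whenever $|V(H)| \geq 3$, via a quick $3$-coloring of $K_{|V(H)|}$), and the second is automatic because $R_{3}(K_{m}) > m + 1$ for $m \geq 3$. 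Case~(f) forces $n = 5$; under $n > R_{3}(H)$ this leaves only $H = P_{2}$, while under $n = R_{3}(H)$ only $H = P_{3}$ survives, a copy of which is visible directly as $v_{4} v_{1} v_{5}$ in color~$1$ of the explicit coloring.

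Case~(e) is the most intricate, because there the color-$1$ subgraph is $K_{n} - K_{4}$, not complete. My approach is to merge colors $2$, $3$, and $4$ into a single super-color; the resulting $2$-coloring of $K_{n}$ satisfies $n \geq R_{2}(H)$ and hence contains a monochromatic $H$. If that $H$ lies in the super-color, then it sits inside the $K_{4}$ on $\{v_{1}, v_{2}, v_{3}, v_{4}\}$, which forces $|V(H)| \leq 4$. When $|V(H)| \geq 5$ this is an immediate contradiction, and when $|V(H)| \leq 4$ I would finish by directly embedding each admissible $H$ into $K_{n} - K_{4}$, using non-special vertices as ``hubs'' between special vertices that are nonadjacent in $H$. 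The very same merging trick eliminates Case~(b) when $H$ is connected: every non-color-$1$ edge lies inside some $V^{(j)}$, because an edge crossing between distinct $V^{(j)}, V^{(j')}$ would have to be color $1$ (else a vertex would appear in two of the disjoint sets), so any monochromatic $H$ produced by $R_{2}(H)$ in the super-color is confined, by connectivity of $H$, to a single $V^{(j)}$ and hence uses only the actual color $j$.

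The main obstacle is Case~(e) in the regime $|V(H)| \leq 4$: no Ramsey inequality alone can finish the job because the candidate monochromatic color is missing all six edges of a $K_{4}$, so the embedding has to exploit the structure of $K_{n} - K_{4}$ explicitly, via a short case analysis over the connected graphs (and $2 K_{2}$) on at most four vertices. A secondary technical chore is verifying the elementary inequality $R_{3}(H) \geq |V(H)| + 1$ used for the density arguments in Cases~(c) and~(d); this holds for every $H$ with no isolated vertices and at least three vertices by an explicit $3$-coloring of $K_{|V(H)|}$ with no monochromatic $H$.
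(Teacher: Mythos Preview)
Your argument is correct, and for Cases~(a), (c), (d), (f), and the connected-$H$ reduction in Case~(b) it matches the paper essentially verbatim. The one genuine divergence is in Case~(e). You treat this as the hard case, merging colors $2,3,4$ and then running a finite case analysis over all graphs $H$ with $|V(H)|\le 4$ to embed $H$ in the color-$1$ graph $K_n-K_4$. The paper instead folds Case~(e) in with Cases~(c) and~(d): deleting any two of the four special vertices, say $v_1,v_2$, leaves the color-$1$ subgraph equal to $K_{n-2}$ minus at most the single edge $v_3v_4$, so all of (c), (d), (e) (and~(f)) furnish a monochromatic $K_{n-2}-e$ and a monochromatic $K_{n-3}$. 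The paper then appeals once to the slightly stronger bounds $R_3(H)\ge |H|+3$ when $H$ is complete and $R_3(H)\ge |H|+2$ otherwise (with $P_3$ and $2P_2$ checked by hand), which lets $H$ sit inside $K_{n-3}$ or $K_{n-2}-e$ directly. Your route buys you the weaker inequality $R_3(H)\ge |V(H)|+1$ at the cost of the small-$H$ embedding analysis; the paper's route buys a uniform treatment of (c)--(f) at the cost of the sharper Ramsey lower bounds. Both are short, but you may want to note that the observation ``remove two special vertices in~(e)'' eliminates your main obstacle entirely.
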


Essentially, this lemma states that in order to prove Conjecture~\ref{Conj:P5Reduction}, it suffices to consider only disconnected graphs $H$ and look within colored complete graphs satisfying Case~(b) of Theorem~\ref{Thm:P5Classify}.

\begin{proof}
Let $H$ be a graph and $k$ be a positive integer. Let $G$ be a $k$-colored complete graph. The goal of this proof is to show that $gr_{k}(P_{5} : H) = R_{3}(H)$ if either $H$ is connected or $G$ satisfies any case of Theorem~\ref{Thm:P5Classify} other than Case~(b).

We consider a coloring $G$ of $K_n$ where $n = R_3(H)$ which contains no rainbow copy of $P_{5}$. If $G$ satisfies Case~(a), the result is immediate by the definition of $R_{3}(H)$. This means we may assume that at least $4$ colors appear in $G$. In each of the Cases~(c), (d), (e), and~(f), $G$ contains a monochromatic copy of $K_{n-2}-e$, and moreover, there is a monochromatic copy of $K_{n-3}$.
\bi
\item If $H = P_{2}$, then trivially $gr_{k}(P_{5} : P_{2}) = R_{3}(P_{2}) = 2$, so we may assume $|H| \geq 3$.
\item If $H$ is a complete graph, the $R_3(H)\geq |H|+3$ clearly, and thus there is a monochromatic copy of $H$ in $G$.
\item If $H$ is not a complete graph and not in $\{P_{3}, 2P_{2}\}$, i.e. $H$ has at least one missing edge, then $R_3(H)\geq |H|+2$, and thus there is a monochromatic copy of $H$ in $G$.
\item If $H = P_{3}$, then $n = R_{3}(P_{3}) = 5$ \cite{MR2310787}. With at least $4$ colors and no rainbow copy of $P_{5}$, $G$ must be the graph in Case~(f), which contains a monochromatic copy of $P_{3}$. 
\item If $H = 2P_{2}$, then $n = R_{3}(2P_{2}) \geq 6$. This means that $G$ must satisfy one of Cases~(c), (d), or~(e), each of which contains a monochromatic copy of $2P_{2}$.
\ei

We may therefore suppose that $G$ satisfies Case~(b) of Theorem~\ref{Thm:P5Classify}. If $H$ is connected, then merging all colors other than color $1$ into a single color would not create a monochromatic copy of $H$. Since $|G| = R_{3}(H) \geq R_{2}(H)$, there is a monochromatic copy of $H$ in $G$ to complete the proof. We may therefore assume that $H$ is disconnected.
\end{proof}

Next we prove that all bipartite graphs satisfy Conjecture~\ref{Conj:P5Reduction}.

\begin{lemma}\labelz{Lem:Bip}
If $H$ is bipartite, then $H \notin \mathscr{H}$.
\end{lemma}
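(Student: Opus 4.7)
By Lemma~\ref{Lem:P5Disconnected+b} I may assume $H$ is disconnected and that $G$ is a $k$-colored $K_n$ with $n = R_3(H)$ satisfying Case~(b) of Theorem~\ref{Thm:P5Classify}: the sets $V^{(j)}$ for $j \ge 2$ are pairwise disjoint and every edge not inside some $V^{(j)}$ is colored $1$. Consequently the color-$1$ subgraph of $G$ contains the complete multipartite graph $K^*$ whose parts are the sets $V^{(j)}$, $j\ge 2$, together with each vertex of $U_0 := V(G) \setminus \bigcup_{j \ge 2} V^{(j)}$ as a singleton part. Writing $A \cup B$ for a bipartition of $H$ with $b := b(H) = |A| \ge |B| = s(H) =: s$, embedding $H$ into $K^*$ reduces to $2$-coloring the parts of $K^*$ with labels $A/B$ so that the $A$-labeled parts total at least $b$ and the $B$-labeled parts total at least $s$; equivalently, I need some subset sum of the multiset of part sizes to land in the interval $[b, n-s]$.

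\textbf{Easy subcase.} Suppose the largest part $V^{(j^*)}$ has $|V^{(j^*)}| \le n - s$. If $|V^{(j^*)}| \ge b$ the singleton subset $\{V^{(j^*)}\}$ already puts the sum into $[b, n-s]$; otherwise every part has size below $b$, and a greedy accumulation of parts in decreasing order (using the singletons of $U_0$ when present to avoid overshoot) lands in $[b, n-s]$. Either way $H \subseteq K^* \subseteq$ (color-$1$ subgraph of $G$), producing the monochromatic copy.

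\textbf{Hard subcase.} Suppose $|V^{(j^*)}| > n - s$. Inside $V^{(j^*)}$ only colors $1$ and $j^*$ appear, so the induced coloring is a $2$-coloring of $K_{|V^{(j^*)}|}$. If $|V^{(j^*)}| \ge R_2(H)$ we already find a monochromatic $H$ there; otherwise $|V^{(j^*)}| \le R_2(H) - 1$, which combined with $|V^{(j^*)}| > n - s$ forces $R_3(H) \le R_2(H) + s - 2$. I would rule this out by exhibiting a $3$-coloring of $K_{R_2(H) + s - 2}$ without monochromatic $H$: start from an extremal $2$-coloring of $K_{R_2(H)-1}$ (in colors $1$ and $2$) avoiding $H$, and append $s - 1$ new vertices whose incident edges (internal and external) all receive a third color. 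The resulting color-$3$ subgraph is the join $K_{s-1} \vee \overline{K}_{R_2(H)-1}$, which contains no copy of $H$ because the $R_2(H)-1$ "leaf" vertices form an independent set in color $3$, so the $s-1$ "hub" vertices would have to form a vertex cover of the embedded $H$; this contradicts $\tau(H) \ge s$ and thus yields $R_3(H) \ge R_2(H) + s - 1$, contradicting the bad case.

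The main obstacle is the vertex-cover bound $\tau(H) \ge s(H)$ (equivalently $\mu(H) \ge s(H)$ by K\"onig's theorem) for the relevant disconnected bipartite $H$. For a single connected bipartite graph without isolated vertices the matching number can fall short of the smaller side, so care is needed in choosing the bipartition of $H$ componentwise; I expect to handle this by flipping individual components of $H$ within the global $(A,B)$ split so that K\"onig's equality holds on each component, and, should that fail, by embedding $H$ directly into the color-$1$ subgraph by exploiting the complete bipartite "bridge" in color $1$ between $V^{(j^*)}$ and the fewer than $s$ vertices of $V(G) \setminus V^{(j^*)}$.
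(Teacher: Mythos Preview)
Your approach diverges from the paper's and has a genuine gap in the hard subcase. The paper never tries to bound $R_3(H)$ from below via a construction; instead it takes a balanced $2$-partition $A_1\cup A_2$ of the parts (minimizing $|A_1|-|A_2|$), recolors to three colors (color~$1\to$ red, colors inside $A_1\to$ blue, colors inside $A_2\to$ green), and invokes the definition of $R_3(H)$ to find a monochromatic $H$. That copy must be blue or green, which forces one of $|A_1|,|A_2|\ge |H|$; the balancing claim then gives the other side $\ge |H|/2\ge s(H)$, and a color-$1$ copy of $K_{b(H),s(H)}\supseteq H$ lives across the cut. No Ramsey lower bound and no vertex-cover inequality are needed.

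Your hard subcase hinges on $\tau(H)\ge s(H)$, which is false even for connected bipartite graphs with no isolated vertices: take two disjoint stars $K_{1,3}$ and join their centers by an edge. This is connected, bipartite with unique bipartition of sizes $4$ and $4$, yet $\tau=\mu=2<4$. For the disconnected graphs you actually need, two disjoint copies of this double-star give $\tau=4$ while every bipartition has $s=8$, so no amount of ``flipping components'' fixes it, and your color-$3$ construction $K_{s-1}\vee\overline{K}_{R_2(H)-1}$ then \emph{does} contain $H$ (since $\alpha(H)=|H|-\tau(H)>b(H)$). The fallback ``embed $H$ via the color-$1$ bridge to the fewer than $s$ outside vertices'' cannot work as stated, because inside $V^{(j^*)}$ the color-$1$ graph is arbitrary. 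Your easy subcase also silently assumes $n\ge 2b+s-2$ for the greedy step to land in $[b,n-s]$; this would need separate justification.
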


\begin{proof}
Let $H$ be a bipartite graph, let $k$ be a positive integer, and suppose $G$ is a $k$-coloring of $K_{n}$ where $n = R_{3}(H)$ which contains no rainbow copy of $P_{5}$ and no monochromatic copy of $H$. By Lemma~\ref{Lem:P5Disconnected+b}, we may assume that $H$ is disconnected and $G$ satisfies Case~(b) of Theorem~\ref{Thm:P5Classify}.

Let $V_2, V_3, \ldots, V_k$ be a partition of $V(G)$ such that there are only edges of color $1$ or $i$ within $V_i$ for $2\leq i\leq k$, and there are only edges of color $1$ in between the parts. Choose a subset of colors $U\subset \{2, 3, \ldots, k\}$ and define vertex sets $A_1=\bigcup_{i\in U}V_i$ and $A_{2} = G \setminus A_{1}$ such that 
\bd
\item{(i)} $|A_1|\geq |A_2|$ and 
\item{(ii)} $|A_{1}| - |A_2|$ is minimal.
\ed

\begin{claim}\labelz{cl:1}
If $|U|\geq 2$, then $|A_2|\geq \frac{|A_1|}{2}$.
\end{claim}

\begin{proof}
Suppose, for a contradiction, that $|A_2|< \frac{|A_1|}{2}$. Since $|U|\geq 2$, we know that $\min_{i\in U}\{|V_i|\} \leq \frac{|A_1|}{2}$ where this minimum is achieved say by the part $V_{j}$, so $|V_{j}| \leq \frac{|A_{1}|}{2}$. If $|A_1\setminus V_j|\geq |A_2\cup V_j|$, then $U'=U\setminus \{j\}$ is a better choice than $U$, contradicting the choice of $U$.
We may therefore assume that $|A_1\setminus V_j| < |A_2\cup V_j|$. Now let $U'= (\{2, 3, \dots, k\} \setminus U) \cup \{j\}$, and correspondingly $A'_1=\bigcup_{i\in U'}V_i=A_2\cup V_j$ and $A'_2= G \setminus A_{2} =A_1\setminus V_j$. Then we have $|A_{1}'| \geq |A_{2}'|$ and
\beqs
(|A_1|-|A_2|)-(|A'_1|-|A'_2|) & = & |A_1|-|A_2|-((|A_2|+|V_j|)-(|A_1|-|V_j|))\\
~ & = & 2(|A_1|-|A_2|-|V_j|) \\ 
~ & > & 2\left(|A_1|-\frac{|A_1|}{2}-\frac{|A_1|}{2}\right)\\
~ & = & 0,
\eeqs
contradicting to the choice of $U$.
\end{proof}

Now we recolor the edges of $G$ to make a $3$-coloring such that

\bd
\item{(i)} change all edges of color $1$ to red;
\item{(ii)} for $i\in U$, change all edges of color $i$ to blue;
\item{(iii)} for $i\in \{2, 3, \ldots, k\}\setminus U$, change all edges of color $i$ to green.
\ed
Let $G'$ denote the resulting graph and since $|G'| = |G| = R_3(H)$, there must be a monochromatic copy of $H$, say $M \subseteq G'$. Since $G$ contains no monochromatic $H$, then $M$ must be colored by blue or green and moreover, if $|U|=1$ then $M$ must be green.

First suppose $|U|=1$ so $M$ is green. Then certainly $|A_{1}| \geq s(H)$ and $|A_{2}| \geq b(H)$. For any subsets $S\subseteq A_1$ and $B\subseteq A_2$ with $|S|=s(H)$ and $|B|=b(H)$, the vertices $S\cup B$ with corresponding edges $E(S, B)$ of color $1$ form a monochromatic copy of $K_{s(H),b(H)}$, which contains a copy of $H$, a contradiction.

Finally suppose $|U|\geq 2$. Then by Claim~\ref{cl:1}, we have $|A_i|\geq \frac{|A_{3-i}|}{2}$ for each $i \in \{1,2\}$. Without loss of generality, we may assume that $M$ is colored by blue in $G'$, which implies $|A_1|\geq |H|$.
Since $|A_2|\geq \frac{|A_1|}{2} \geq \frac{|H|}{2} \geq s(H)$, we can choose subsets of vertices $S\subseteq A_2$ with $|S|=s(H)$ and $B\subseteq A_1$ with $|B|=b(H)$. Then the vertices $S\cup B$ with corresponding edges $E(S, B)$ of color $1$ form a monochromatic copy of $K_{s(H),b(H)}$ using color $1$ in $G$, which contains a copy of $H$, a contradiction to complete the proof of Lemma~\ref{Lem:Bip}.
\end{proof}

Our next lemma may appear, on the surface, to be a relatively simple observation but it leads to a variety of other results, as presented in the subsection to follow.

\begin{lemma}\labelz{Lem:CH}
Let $H$ be a disconnected graph and $\ch(H)$ be the set of connected graphs containing $H$ as a subgraph. If $R_3(H)\geq R_2(\ch(H))$, then $gr_k(P_5 : H)=R_3(H)$.
\end{lemma}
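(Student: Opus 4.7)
The plan is to apply Lemma~\ref{Lem:P5Disconnected+b} to reduce to the dominant-color case and then merge colors so as to invoke the hypothesis $R_{3}(H)\geq R_{2}(\ch(H))$. The lower bound $gr_{k}(P_{5}:H)\geq R_{3}(H)$ is immediate from the discussion following Conjecture~\ref{Conj:P5Reduction}, so only the upper bound needs work. Suppose for contradiction that $G$ is a $k$-colored $K_{n}$ on $n=R_{3}(H)$ vertices containing no rainbow $P_{5}$ and no monochromatic copy of $H$. Because $H$ is disconnected, Lemma~\ref{Lem:P5Disconnected+b} forces $G$ to satisfy Case~(b) of Theorem~\ref{Thm:P5Classify}, so there is a partition $V(G)=V_{2}\cup V_{3}\cup\cdots\cup V_{k}$ such that inside each $V_{i}$ only colors $1$ and $i$ appear, while every edge between two distinct parts has color $1$.

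Next I would collapse colors by recoloring $G$ into a $2$-colored graph $G^{*}$: paint every edge of color $1$ red and every edge of color at least $2$ blue. Since $|V(G^{*})|=R_{3}(H)\geq R_{2}(\ch(H))$, the definition of $R_{2}(\ch(H))$ guarantees a monochromatic copy of some connected graph $H^{*}\in\ch(H)$ in $G^{*}$. I would then split on the color of $H^{*}$. If $H^{*}$ is red, then $H^{*}$ already appears in $G$ in color $1$, and since $H\subseteq H^{*}$ this contradicts the assumption that $G$ has no monochromatic copy of $H$. If $H^{*}$ is blue, then every edge of $H^{*}$ has color at least $2$ in $G$, so each such edge lies inside some part $V_{i}$, since all inter-part edges are red. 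Connectivity of $H^{*}$ then forces its entire vertex set to lie in a single part $V_{j}$, and inside $V_{j}$ the only non-red color available is $j$, so $H^{*}$ is in fact monochromatic of color $j$ in $G$, again producing a monochromatic copy of $H\subseteq H^{*}$.

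The only subtle point I foresee is justifying why the blue case is forced into a single part: this is precisely where the connectedness built into the definition of $\ch(H)$ is essential, since without it a blue copy of $H^{*}$ could plausibly span several parts and would not collapse to a single non-red color in $G$. Everything else is bookkeeping around the Case~(b) structure already extracted in Lemma~\ref{Lem:P5Disconnected+b}, so I expect no further technical difficulty.
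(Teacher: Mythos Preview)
Your proposal is correct and follows essentially the same approach as the paper: reduce to Case~(b) via Lemma~\ref{Lem:P5Disconnected+b}, merge colors into a $2$-coloring, apply the hypothesis $R_{3}(H)\geq R_{2}(\ch(H))$ to find a monochromatic $H^{*}\in\ch(H)$, and then use connectivity of $H^{*}$ to pull it back to a single color class in $G$. Your explanation of why a blue $H^{*}$ must lie in a single part $V_{j}$ is in fact slightly more explicit than the paper's own treatment.
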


\begin{proof}
Let $G$ be a rainbow $P_{5}$-free $k$-coloring of $K_{R_{3}(H)}$. By Lemma~\ref{Lem:P5Disconnected+b}, it suffices to consider such colorings $G$ that satisfy Case~(b) of Theorem~\ref{Thm:P5Classify}.
We recolor the edges of $G$ such that

\bd
\item{(i)} replace all edges of color $1$ with blue;
\item{(ii)} for $i\in \{2, 3, \ldots, k\}$, replace all edges of color $i$ with red.
\ed

Let $G'$ be the resulting graph and note that $|G'|=|G|=R_3(H)\geq R_2(\ch(H))$, there is a monochromatic copy of some graph $H'$ in $G'$, where $H' \in \ch(H)$. If $H'$ is blue, then there is a monochromatic $H'$ with color $1$ in $G$, which contains a $H$, as desired. On the other hand, if $H'$ is red, then since $H'$ is connected, there is a monochromatic copy of $H'$ in color $i$ for some $i \in \{2, 3, \ldots, k\}$ in $G$, which contains a $H$, as desired.
\end{proof}

Lemma~\ref{Lem:CH} provides a general framework for proving that $gr_k(P_5 : H)=R_3(H)$ for various graphs $H$. For example, we will use Lemma~\ref{Lem:CH} to prove that $gr_k(P_5 : mK_3)=R_3(mK_3)$ and $gr_k(P_5 : mC_5)=R_3(mC_5)$ and others in Subsection~\ref{Subsec:CH}.

\subsection{Applications of Lemma~\ref{Lem:CH}}\labelz{Subsec:CH}

In order to apply Lemma~\ref{Lem:CH}, we must compute (or at least bound) $R_{2}(\ch(H))$. We therefore state the following propositions which compute this value for triangles and $5$-cycles.

\begin{proposition}[\cite{MR3539092}]\labelz{Prop:mK3} 
For $m\geq 2$, $R_2(\ch(mK_3))=7m-2$.
\end{proposition}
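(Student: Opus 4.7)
The plan has two parts: an explicit construction for the lower bound, and a structural analysis combining triangle-packing arguments with Gyárfás-style component analysis for the upper bound.

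For the lower bound I would exhibit a $2$-coloring of $K_{7m-3}$ with no monochromatic connected subgraph containing $mK_3$. Partition the vertex set into parts $A, B, C$ with $|A|=|B|=3m-1$ and $|C|=m-1$, color the edges inside each part red and the edges between parts blue. The red graph has three components, each of size at most $3m-1$ and hence containing at most $m-1$ vertex-disjoint triangles. The blue graph is the connected complete tripartite graph $K_{3m-1,3m-1,m-1}$; since every triangle in a complete multipartite graph uses exactly one vertex from each of three parts, the maximum blue triangle packing has size $\min(3m-1,3m-1,m-1)=m-1$.

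For the upper bound, consider a $2$-coloring of $K_n$ with $n=7m-2$. Since in every $2$-coloring of a complete graph at least one color spans a connected subgraph, I may assume the red subgraph is connected on $V$. If the red subgraph already contains $m$ vertex-disjoint triangles, we are done; otherwise fix a maximum red triangle packing $T_1,\dots,T_{m-1}$, let $S=V(T_1)\cup\dots\cup V(T_{m-1})$ and $V'=V\setminus S$, so $|S|=3(m-1)$, $|V'|\geq 4m+1$, and by maximality the red subgraph on $V'$ is triangle-free. Applying the same maximum-packing / vertex-cover principle to the blue subgraph on $V'$ then shows it has $m$ vertex-disjoint triangles: otherwise some $T\subseteq V'$ with $|T|\le 3m-3$ makes the blue subgraph on $V'\setminus T$ triangle-free too, leaving both colors triangle-free on $|V'\setminus T|\ge m+4\ge 6$ vertices, contradicting $R(3,3)=6$.

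The next step is to localize these $m$ disjoint blue triangles inside a single blue component of $G$. Because cross-blue-component edges are red, three or more components of blue on $V'$ would produce a red triangle on $V'$, contradicting that red is triangle-free there. So blue on $V'$ has at most two components; if it is connected, the $m$ triangles lie in one blue component of $G$ and we are done. Otherwise, let the two components be $X_1, X_2$; the same cross-component reasoning forces each $X_i$ to be a blue clique on $V'$ (a red edge inside $X_i$ together with any vertex of $X_{3-i}$ would make a red triangle on $V'$). If some $|X_i|\ge 3m$, then $X_i$ alone contains $m$ disjoint blue triangles in a blue component of $G$, so I may assume $m+2\le |X_i|\le 3m-1$ for both $i$, and if the blue components of $G$ containing $X_1, X_2$ coincide we are also done.

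The main obstacle is the remaining configuration: $X_1, X_2$ lie in distinct blue components $\tilde X_1\ne\tilde X_2$ of $G$. Then $S$ partitions as $S_0\cup S_1\cup S_2$, where $S_i$ sends blue edges only to $X_i$ and $S_0$ sends only red edges to $X_1\cup X_2$. If $S_0\ne\emptyset$, any $v\in S_0$ together with $u_1\in X_1,\ u_2\in X_2$ forms a red triangle, and an augmenting-swap replacing the packing triangle $T_j\ni v$ by $\{v,u_1,u_2\}$ and re-using the remaining two vertices of $T_j$ via the all-red bipartite structure between $\tilde X_1$ and $\tilde X_2$ is intended to produce a red packing of size $m$, contradicting maximality. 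If $S_0=\emptyset$, then $\tilde X_1\cup\tilde X_2=V$, so $\max(|\tilde X_1|,|\tilde X_2|)\ge \lceil (7m-2)/2\rceil \ge 3m$; a short count of the few red edges inside the larger $\tilde X_i$ (only edges of packing triangles $T_j\subseteq S_i$ and possibly some $S_i$-to-$X_i$ red edges) against its large blue clique $X_i$ shows the blue subgraph on $\tilde X_i$ still contains $m$ disjoint triangles. The hardest part is clearly the augmenting-swap in the $S_0\ne\emptyset$ subcase, which appears to need a finite case analysis on how the $T_j$ distribute across $S_0, S_1, S_2$, together with the red bipartite edges between $\tilde X_1$ and $\tilde X_2$ to close up the new triangles.
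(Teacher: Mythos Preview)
The paper does not prove this proposition; it is quoted from \cite{MR3539092}, so there is no in-paper argument to compare against. What the paper \emph{does} prove is the analogous statement $R_2(\ch(mC_5))=11m-2$ (Proposition~\ref{Prop:mC5}), and its method is rather different from yours: instead of analysing a maximum triangle packing in the connected colour, the paper assumes blue is connected, invokes the known bound on $R_2(mC_5)$ to force a red $mC_5$, deduces that red is disconnected, and then works with the red components $X_1,\dots,X_s$ directly, repeatedly applying the auxiliary result $R(\ch(mC_5),nK_2)=5m+n-1$ (Lemma~\ref{le:1.3}) to locate blue matchings inside large red components and assemble a blue $mC_5$ from pieces across components.

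Your lower bound is correct and is exactly the $mK_3$ analogue of the paper's $mC_5$ construction. Your upper bound outline is sound through the reduction to two blue cliques $X_1,X_2$ on $V'$ lying in distinct blue components $\tilde X_1\neq\tilde X_2$ of $G$, but the two terminal subcases are genuinely incomplete, as you yourself flag. In the $S_0\neq\emptyset$ subcase, the augmenting swap can stall in the configuration $T_j=\{v,a,b\}$ with $v\in S_0$, $a\in S_1$, $b\in S_2$ where $a$ is blue to all of $X_1$ and $b$ is blue to all of $X_2$: no second red triangle on $\{a,b\}\cup V'$ is then available, and one must instead switch tack and exploit that $X_1\cup\{a\}$ is a strictly larger blue clique. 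In the $S_0=\emptyset$ subcase, the assertion that the larger $\tilde X_i$ has ``few red edges'' is unjustified: membership in $S_i$ only guarantees \emph{some} blue edge to $X_i$, so $S_i$ may carry many internal red edges and many red edges to $X_i$, and $\tilde X_i$ need not be close to a blue clique. These gaps look repairable but not by a short count; if you want a self-contained proof, the cleaner route is the component-based strategy modelled on the paper's proof of Proposition~\ref{Prop:mC5}, using $R_2(mK_3)=5m$ and the triangle analogue of Lemma~\ref{le:1.3}.
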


\begin{proposition}\labelz{Prop:mC5} 
For $m\geq 2$, $R_2(\ch(mC_5))=11m-2$.
\end{proposition}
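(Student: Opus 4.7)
The plan is to prove the lower bound by an explicit construction, and the upper bound by induction on $m$ mirroring the treatment of the analogous $mK_3$ proposition.

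For the lower bound I would exhibit a $2$-coloring of $K_{11m-3}$ with no monochromatic connected subgraph containing $mC_5$. Partition the vertex set into $A \cup B \cup Y$ with $|A| = |B| = 5m - 1$ and $|Y| = m - 1$; color edges whose endpoints lie in different parts red, and all other edges blue. The red graph is the complete tripartite graph $K_{5m-1, 5m-1, m-1}$, which is connected for $m \geq 2$. The key point is that every $C_5$ in a tripartite graph must meet all three parts: otherwise it would lie in a bipartite subgraph, which contains no odd cycle. Hence $m$ vertex-disjoint $C_5$'s would require at least $m$ vertices from $Y$, contradicting $|Y| = m - 1$. The blue graph is the vertex-disjoint union of three cliques on $5m - 1$, $5m - 1$, and $m - 1$ vertices, none of which has as many as $5m$ vertices, so no blue component can contain $mC_5$. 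This establishes $R_2(\ch(mC_5)) \geq 11m - 2$.

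For the upper bound I would argue by induction on $m$, taking $m = 1$ as the base (just $R(C_5, C_5) = 9$, since a monochromatic $C_5$ is itself a connected subgraph containing $C_5$). For the inductive step, consider any $2$-coloring of $K_n$ with $n = 11m - 2 = 11(m-1) - 2 + 11$. Reserve any $11$ vertices as a set $W$ and apply the inductive hypothesis to the remaining $11(m-1) - 2$ vertices to obtain a monochromatic, say red, connected subgraph $H$ containing $(m-1)$ vertex-disjoint $C_5$'s. Since $|W| = 11 > R(C_5, C_5)$, the set $W$ contains a monochromatic $C_5$, call it $Z$. The favorable case is when $Z$ is red and some vertex of $Z$ is joined to $V(H)$ by a red edge, in which case $H$ together with $Z$ is a red connected subgraph containing $m$ vertex-disjoint $C_5$'s, as required.

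The main obstacle is the remaining case, when $Z$ is blue or when $Z$ is red but every edge between $V(Z)$ and $V(H)$ is blue. Here I would exploit the resulting dense blue structure: iteratively peel off monochromatic $C_5$'s from $W$ (invoking $R(C_5, C_5) = 9$ repeatedly on shrinking subsets) and track how they interact with $H$, trying either to produce the desired red extension eventually, or to assemble $m$ vertex-disjoint blue $C_5$'s lying inside a single blue connected component, formed from blue edges within $W$, within $V(H)$, and across $W$ and $V(H)$. The careful bookkeeping needed for this case analysis, following the template of \cite{MR3539092} for $R_2(\ch(mK_3))$, is where essentially all the difficulty of the proof lies.
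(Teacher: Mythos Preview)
Your lower bound construction is correct and is exactly the paper's construction (with the colors swapped).

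For the upper bound, however, your proposal has a genuine gap, and the paper takes a different route. Your inductive plan stalls at precisely the point you flag: with $|W|=11$ and $R(C_5,C_5)=9$, you can extract \emph{one} monochromatic $C_5$ from $W$, but after removing its five vertices only six remain, so ``iteratively peeling off $C_5$'s from $W$'' cannot be done even a second time. In the bad case (a blue $Z$, or a red $Z$ with only blue edges to $V(H)$), the dense blue structure you point to is a blue $K_{5,|V(H)|}$ between $V(Z)$ and $V(H)$, which is bipartite and contains no $C_5$ at all; to build blue $C_5$'s you would need blue edges inside $V(H)$ or inside $W\setminus Z$, and nothing in your outline controls those. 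The $mK_3$ template does not transfer cleanly, because a triangle across a bipartition needs only one edge on one side, whereas a $C_5$ needs a more delicate pattern; the ``careful bookkeeping'' you defer is not just tedious but requires ideas you have not supplied.

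The paper avoids induction entirely. It first invokes the known bound $R_2(mC_5)\le 8m+1<11m-2$ to get a monochromatic $mC_5$ outright; since one color (say blue) is connected, this $mC_5$ must be red with the red graph disconnected into components $X_1,\dots,X_s$, and $|X_1|\le 8m$. The key auxiliary lemma is $R(\ch(mC_5),nK_2)=5m+n-1$: inside any $X_i$ of size $5m+k_i$ this forces a blue matching $(k_i+1)K_2$, and each such blue matching edge, together with vertices from other $X_j$'s (all cross edges being blue), yields a blue $C_5$. A case analysis on $|X_1|$ and $|X_2|$ then assembles $m$ disjoint blue $C_5$'s inside the connected blue graph. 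So the paper's two ingredients --- the $R_2(mC_5)$ bound and the matching-versus-$\ch(mC_5)$ Ramsey number --- replace your missing endgame.
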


We will provide the proof of Proposition~\ref{Prop:mC5} later, but first we apply Propositions~\ref{Prop:mK3} and~\ref{Prop:mC5} in the following result.

\begin{corollary}\labelz{Cor:mK3mC5} 
For $m\geq 2$,
\bd
\item{\rm (1) } $gr_k(P_5 : mK_3)=R_3(mK_3)$;
\item{\rm (2) } $gr_k(P_5 : mC_5)=R_3(mC_5)$.
\ed
\end{corollary}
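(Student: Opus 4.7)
The plan is to invoke Lemma~\ref{Lem:CH} once with $H = mK_3$ and once with $H = mC_5$. By Propositions~\ref{Prop:mK3} and~\ref{Prop:mC5}, the hypothesis $R_3(H) \geq R_2(\ch(H))$ of Lemma~\ref{Lem:CH} becomes, respectively,
\[
R_3(mK_3) \geq 7m-2 \qquad \text{and} \qquad R_3(mC_5) \geq 11m-2.
\]
Granting these two inequalities, Lemma~\ref{Lem:CH} delivers both equalities in the corollary immediately, so all the work lies in establishing them.

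Each of these inequalities amounts to exhibiting a $3$-coloring of $K_{7m-3}$ (respectively, $K_{11m-3}$) that contains no monochromatic $mK_3$ (respectively, $mC_5$). The most economical route is to reuse the extremal $2$-colorings that witness Propositions~\ref{Prop:mK3} and~\ref{Prop:mC5}: such a $2$-coloring, regarded as a $3$-coloring in which the third color is unused, is certainly rainbow-$P_5$-free, and by construction contains no monochromatic connected supergraph of $mK_3$ (resp. $mC_5$). It then remains to check that these extremal $2$-colorings also avoid a monochromatic $mK_3$ (resp. $mC_5$) as a disconnected subgraph — something not automatic from the $\ch(\cdot)$ hypothesis. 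If the check fails for some extremal configuration, one can perturb it by recoloring a few edges to a third color in order to destroy any stray disjoint collection of $m$ triangles (or $5$-cycles), taking care that the newly introduced color does not itself contain a monochromatic $mK_3$ or $mC_5$.

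The main obstacle is precisely this stronger verification. A color class may fail to contain any connected graph in $\ch(mK_3)$ while still containing $m$ vertex-disjoint triangles spread across distinct components (and analogously for $mC_5$). Handling this requires a concrete structural description of the extremal $2$-colorings from \cite{MR3539092} and from our own proof of Proposition~\ref{Prop:mC5}, after which confirming the absence of a disconnected monochromatic $mK_3$ or $mC_5$ becomes a routine case analysis. Once this structural step is carried out, Corollary~\ref{Cor:mK3mC5} follows without further work by invoking Lemma~\ref{Lem:CH}.
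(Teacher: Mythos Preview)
Your overall strategy matches the paper's: verify $R_3(H) \geq R_2(\ch(H))$ and then invoke Lemma~\ref{Lem:CH}. The gap is in how you propose to establish the inequality. Reusing the extremal $2$-coloring that witnesses $R_2(\ch(mC_5)) = 11m-2$ does \emph{not} work: that coloring (see the lower-bound construction in the proof of Proposition~\ref{Prop:mC5}) consists of two disjoint copies of $K_{5m-1}$ in color~$1$, a $K_{m-1}$ in color~$2$, and all cross edges in color~$2$. Each $K_{5m-1}$ contains $m-1$ vertex-disjoint $C_5$'s in color~$1$, so together they contain $2m-2 \geq m$ disjoint monochromatic $5$-cycles; hence this $2$-coloring already contains a monochromatic $mC_5$. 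The analogous issue arises for $mK_3$. Thus your ``check'' fails outright on the natural extremal configurations, and the promised perturbation is never actually specified --- you have deferred exactly the step that carries all the content.

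The paper avoids this detour entirely by writing down an explicit $3$-coloring of $K_{7m-3}$ (resp.\ $K_{11m-3}$) from scratch: two cliques of order $3m-1$ (resp.\ $5m-1$) in colors~$1$ and~$2$, a clique of order $m-1$ in color~$3$, and all cross edges in color~$3$. Colors~$1$ and~$2$ are now each confined to a single clique too small for $m$ disjoint triangles (resp.\ $5$-cycles), and every color-$3$ triangle (resp.\ $5$-cycle) must use a vertex of the small part, so color~$3$ admits at most $m-1$ of them. This is, in effect, exactly the ``perturbation'' you gesture at --- give the two large monochromatic cliques \emph{different} colors --- but the paper simply presents the construction directly rather than arriving at it through a failed attempt.
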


\begin{proof}
In order to show that $R_3(H)\geq R_2(\ch(H))$ for $H \in \{mK_{3}, mC_{5}\}$, it suffices to construct a $3$-coloring of a complete graph of order at least $R_{2}(\ch(H)) - 1$ which contains no monochromatic copy of $H$.

First suppose $H = mK_{3}$. For $i \in \{1, 2\}$, let $G_{i}$ be a complete graph of order $3m - 1$ colored entirely with color $i$. Let $G_{3}$ be a complete graph of order $m - 1$ colored entirely with color $3$ and let $G = G_{1} \cup G_{2} \cup G_{3}$ with all edges between these graphs having color $3$. This graph $G$ is a coloring of the complete graph of order $7m - 3$ and contains no monochromatic copy of $mK_{3}$. See Figure~\ref{Fig:K3C5}(a).

Next suppose $H = mC_{5}$. For $i \in \{1, 2\}$, let $G_{i}$ be a complete graph of order $5m - 1$ colored entirely with color $i$. Let $G_{3}$ be a complete graph of order $m - 1$ colored entirely with color $3$ and let $G = G_{1} \cup G_{2} \cup G_{3}$ with all edges between these graphs having color $3$. This graph $G$ is a coloring of the complete graph of order $11m - 3$ and contains no monochromatic copy of $mC_{5}$. See Figure~\ref{Fig:K3C5}(b).

Finally, since $R_3(H)\geq R_2(c(H))$ for $H \in \{mK_{3}, mC_{5}\}$, by Lemma~\ref{Lem:CH}, we have the desired result.
\end{proof}


\begin{figure}[H]
\begin{center}
\includegraphics{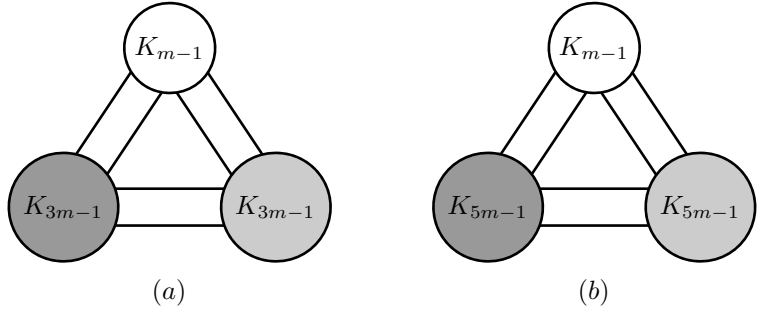}
\caption{Colored complete graphs with no monochromatic $mK_{3}$ or $mC_{5}$ respectively \labelz{Fig:K3C5}}
\end{center}
\end{figure}

Before the proof of Proposition~\ref{Prop:mC5}, we first provide some supporting lemmas.

\begin{lemma}[\cite{MR635872}]\labelz{le:1.1}
For every connected graph $G$ containing at least one edge, we have $R_2(G)\geq (\chi (G)-1)(|G|-1)+s(G)$, where $s(G)$ is the minimum number of vertices in some color class under all proper vertex colorings in $\chi (G)$ colors.
\end{lemma}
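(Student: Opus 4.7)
The plan is to prove the lower bound by exhibiting a $2$-coloring of the edges of $K_N$, where $N = (\chi(G)-1)(|G|-1) + s(G) - 1$, that contains no monochromatic copy of $G$. This would establish $R_2(G) > N$, which is exactly the claimed inequality. The construction I would use is the standard ``blow-up'' of a proper $\chi(G)$-coloring: partition the vertex set into $\chi(G)$ blocks $V_1, \ldots, V_{\chi(G)}$, with $|V_i| = |G|-1$ for $1 \le i \le \chi(G)-1$ and $|V_{\chi(G)}| = s(G)-1$, then color every edge inside a block red and every edge between distinct blocks blue.

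The red side is immediate: each red connected component is contained in a single block, so it has at most $|G|-1$ vertices, and since $G$ is connected on $|G|$ vertices, no red copy of $G$ can appear. This is the step in which the hypothesis that $G$ is connected is actually used, while the assumption that $G$ has at least one edge is just there to ensure $\chi(G) \ge 2$ so the formula is meaningful.

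The substantive step is showing that the blue graph, which is the complete multipartite graph with parts $V_1, \ldots, V_{\chi(G)}$, does not contain a copy of $G$. Assume for contradiction that $G$ embeds in it, and send each vertex of $G$ to the index of its host part; this yields a proper vertex coloring of $G$ in at most $\chi(G)$ colors. If strictly fewer than $\chi(G)$ parts meet the image of $G$, we obtain a proper $(\chi(G)-1)$-coloring of $G$, contradicting the definition of $\chi(G)$. Otherwise all $\chi(G)$ parts are used, and the color class of $G$ lying in $V_{\chi(G)}$ has at most $s(G)-1$ vertices, contradicting the definition of $s(G)$ as the minimum, over all proper $\chi(G)$-colorings, of the smallest color-class size.

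I expect this final bookkeeping to be the only place where care is needed: one must arrange for exactly one block of size $s(G)-1$ (rather than zero or two) in the partition so that the $s(G)$-refinement of the basic Chv\'atal--Harary bound is extracted correctly. Everything else is a routine verification, and there is no genuine technical obstacle beyond checking that the connectedness of $G$ forces a would-be red copy into a single block.
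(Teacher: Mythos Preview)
Your construction is correct and is precisely the standard argument for this bound (due to Burr). Note, however, that the paper does not actually supply a proof of this lemma: it is quoted from the literature (reference \cite{MR635872}) and used as a black box, so there is no ``paper's own proof'' to compare against. Your write-up would serve perfectly well as a self-contained proof if one were desired.
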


\begin{lemma}[\cite{MR1375096}]\labelz{le:1.2}
$8m-1 \leq R_2(mC_5) \leq 8m+1$.
\end{lemma}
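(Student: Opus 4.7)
The plan is to establish the two inequalities separately, both leaning on the classical value $R_2(C_5)=9$.

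For the upper bound $R_2(mC_5)\leq 8m+1$, I would use induction on $m$, with the base case being exactly $R_2(C_5)=9$. In the inductive step, consider a $2$-colored $K_{8m+1}$. The naive move is to find one monochromatic $C_5$ on some $9$-vertex subset, delete its five vertices, and apply induction to the remaining $K_{8m-4}$, which strictly exceeds the threshold $R_2((m-1)C_5)\leq 8m-7$. The flaw is that the $(m-1)C_5$ produced by induction need not share its color with the removed $C_5$. To fix this, I would commit to a majority color: some color, say red, contains at least $\binom{8m+1}{2}/2$ edges, so the red subgraph is dense enough that a greedy $C_5$-packing argument (either by repeated application of $R_2(C_5)=9$ inside a sufficiently red-heavy $9$-vertex subset, or by an appeal to a Corr\'adi--Hajnal-type packing lemma for vertex-disjoint cycles) yields $m$ vertex-disjoint red $C_5$'s. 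The $+1$ slack in $8m+1$ relative to the lower bound $8m-1$ is precisely what is needed to make this counting close.

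For the lower bound $R_2(mC_5)\geq 8m-1$, I must construct a $2$-coloring of $K_{8m-2}$ with no monochromatic $mC_5$. The natural template starts from an extremal $2$-colored $K_8$ with no monochromatic $C_5$ that witnesses $R_2(C_5)\geq 9$; call this coloring $G_0$. I would take $m-1$ pairwise disjoint copies of $G_0$ on a set of $8(m-1)$ vertices together with an auxiliary set of $6$ extra vertices, and color the cross-edges so that one color class is globally bipartite (and hence contains no odd cycle, in particular no $C_5$, at all) while the other color class splits into components each of which already fails to contain a $C_5$, either by the choice of $G_0$ within a block or by size restrictions on the auxiliary set. A routine packing count then caps the number of vertex-disjoint monochromatic $C_5$'s in the non-bipartite color at $m-1$.

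The main obstacle is the lower-bound construction: $5$-cycles can easily jump between blocks through the cross-edges, so the global bipartition and the between-block coloring pattern must be chosen carefully to rule out every such cross $C_5$ in the non-bipartite color; matching the precise count $8m-2$ will likely force a slight asymmetry between the $m-1$ copies of $G_0$ and the auxiliary $6$-vertex piece. By comparison, the upper-bound induction is fairly routine once the majority-color greedy extraction replaces the naive one-$C_5$-at-a-time deletion.
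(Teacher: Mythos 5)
This lemma is not proved in the paper at all; it is quoted from the cited reference, so there is no internal proof to compare against. Judged on its own merits, your proposal has two genuine gaps, one in each direction.

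For the lower bound, the scheme of making one color class globally bipartite cannot work. If a color class on $8m-2$ vertices is bipartite with parts $P$ and $Q$, then the other color class contains the two complete graphs on $P$ and on $Q$, and these alone contain at least $\flr{|P|/5}+\flr{|Q|/5}\geq m$ vertex-disjoint $C_5$'s for every $m\geq 2$ (check: already for $m=2$, every split of $14$ vertices into two cliques yields two disjoint $C_5$'s). So the non-bipartite color always contains a monochromatic $mC_5$ and the construction fails before the cross-edge issue you worry about even arises. The standard construction exploits the independence number of $C_5$ rather than its odd girth: take a red clique on $5m-1$ vertices and color everything else (a set $B$ of $3m-1$ vertices together with all cross edges) blue. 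Red has too few vertices for $m$ disjoint $C_5$'s, and every blue $C_5$ uses at most $2$ vertices from the red clique (which is blue-independent), hence at least $3$ vertices of $B$, capping the blue packing at $m-1$.

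For the upper bound, the ``majority color'' step is false as stated: a color class containing half the edges of $K_{8m+1}$ can be complete bipartite and therefore contain no $C_5$ whatsoever, so density alone gives you no greedy packing. Your diagnosis of the naive induction (the colors of the extracted $C_5$ and of the inductively obtained $(m-1)C_5$ need not match) is correct, but the known repair is different: one finds a red $C_5$ and a blue $C_5$ whose union spans at most $8$ vertices, deletes that union, applies induction to the remaining $\geq 8(m-1)+1$ vertices, and then attaches whichever of the two stored cycles matches the color of the resulting $(m-1)C_5$. This overlap trick (in the style of Burr--Erd\H{o}s--Spencer for $mK_3$) is the missing idea, and proving that such an overlapping red/blue pair exists is where the real work of the upper bound lies.
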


\begin{lemma}\labelz{le:1.3}
For $m\geq n \geq 1$, $R(\ch(mC_5), nK_2)=5m+n-1$.
\end{lemma}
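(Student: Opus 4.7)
The plan is to establish matching lower and upper bounds of $5m + n - 1$. For the lower bound, I would exhibit a $2$-coloring of $K_{5m+n-2}$ avoiding both forbidden substructures: take a red clique $R$ on $5m - 1$ vertices together with $n - 1$ extra vertices $X$, and color every edge incident to $X$ blue. The red subgraph is just $K_{5m-1}$, which is connected but has fewer than $5m$ vertices, so it cannot contain $mC_5$, while a maximum blue matching (pair each vertex of $X$ with a distinct vertex of $R$) has size exactly $n - 1$.

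For the upper bound, set $N = 5m + n - 1$ and consider any $2$-coloring of $K_N$ with no blue $nK_2$. I would fix a maximum blue matching $M$ of size $\mu \le n - 1$ and let $I$ be the set of its $N - 2\mu$ unmatched vertices; maximality forces $I$ to be independent in blue and hence a red clique. The key structural claim, which I would prove by a one-step augmenting argument, is that for every $uv \in M$, at least one of $u, v$ has at most one blue neighbor in $I$ -- otherwise one could choose $a \in N_B(u) \cap I$ and $b \in N_B(v) \cap I$ with $a \neq b$, and then $(M \setminus \{uv\}) \cup \{ua, vb\}$ would be a blue matching of size $\mu + 1$. For each $uv \in M$ I pick such a ``good'' endpoint $t_{uv} \in \{u,v\}$, obtaining a set $T$ of size $\mu$ in which every vertex has at least $|I| - 1$ red neighbors in $I$.

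Let $R'$ denote the red subgraph induced by $I \cup T$; it has $|V(R')| = N - \mu \ge 5m$ vertices and is connected, because $I$ is a red clique and each $t \in T$ has at least one red neighbor in $I$. It remains to find $m$ pairwise vertex-disjoint red $C_5$'s inside $R'$. If $|I| \ge 5m$, the clique $I$ alone contains $mC_5$; otherwise $|I| = 5m - k$ with $k := 2\mu - n + 1 \ge 1$, and the hypotheses $\mu \le n - 1$ and $m \ge n$ together ensure $k \le |T|$ and $k \le m - 1$. I would then pick $t_1, \dots, t_k \in T$ with respective (at most one) blue neighbors $w_i \in I$, partition $I$ into $k$ blocks $A_i$ of size $4$ and $m - k$ blocks of size $5$, and realize each $C_5$ containing $t_i$ as $t_i - a - b - c - d - t_i$ on $\{t_i\} \cup A_i$. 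The main obstacle is that $t_i$ must not be adjacent in its $C_5$ to its bad neighbor $w_i$; I resolve this by placing $w_i$ at one of the two interior positions $b, c$ of $A_i$ whenever $w_i \in A_i$, which is always feasible since each block carries only this single constraint and has two safe positions. The remaining $m - k$ blocks spawn $C_5$'s inside the red clique $I$, and all $m$ cycles together lie in the connected red subgraph $R'$, producing the required connected red graph containing $mC_5$.
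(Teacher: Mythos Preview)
Your proof is correct and follows essentially the same approach as the paper: the same lower-bound construction, and for the upper bound both arguments extract from each blue matching edge a ``good'' endpoint with at most one blue neighbor in the residual red clique, then attach these endpoints to $P_4$'s in the clique to build the required $C_5$'s. The only minor differences are that you work directly with a maximum blue matching (avoiding the paper's induction on $n$) and you spell out how to place each bad neighbor $w_i$ at an interior position of its $P_4$, a detail the paper leaves implicit.
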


\begin{proof}
For the lower bound, let $G_{1}$ be a complete graph of order $5m - 1$ colored entirely with color $1$ and let $G_{2}$ be a complete graph of order $n - 1$ colored entirely with color $2$. Then let $G = G_{1} \cup G_{2}$ where all edges between the two graphs have color $2$. This graph has order $5m + n - 2$ and contains no copy of $mC_{5}$ (so certainly no copy of a graph in $\ch(mC_{5})$) in color $1$ and no copy of $nK_{2}$ in color $2$. See Figure~\ref{Fig:C5K2}.

We prove the upper bound by induction on $n$. For $n=1$, the result is trivial. Let $G$ be a red-blue coloring of $K_{5m + n - 1}$ and suppose for a contradiction that there is no red copy of a graph in $\ch(mC_{5})$ and no blue copy of $nK_{2}$. By induction on $n$, we may assume there is a blue matching $M=(n-1)K_2$. Since there is no blue $nK_2$, every edge $e_i\in M$ contains a vertex $v_i$ adjacent in red to all but at most one vertex of $X=V(G)\setminus V(M)$. (Note that both ends of $e_{i}$ can have a red edge to a single vertex but then no other red edges.) Additionally, $X$ induces a red complete graph.

Since $|X|=5m+n-1-2(n-1)\geq 4n+1$, we can select $n-1$ pairwise disjoint red copies of $P_4$ within $X$ and $n-1$ corresponding vertices in $M$ (using one end from each matching edge) to form a red copy of $(n-1)C_5$. Since there are $5m+n-1-2(n-1)-4(n-1)=5(m-n+1)$ vertices in the remainder of $X$, we can find a red copy of $(m-n+1)C_5$ on these vertices. It is easy to see that the $(n-1)C_5$ and the $(m-n+1)C_5$ can be included into a connected red subgraph, producing the desired red copy of a graph in $\ch(mC_5)$.
\end{proof}

\begin{figure}[H]
\begin{center}
\includegraphics{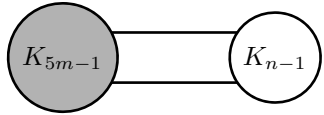}
\caption{Colored complete graph with no monochromatic $mC_{5}$ or $nK_{2}$ \labelz{Fig:C5K2}}
\end{center}
\end{figure}

\noindent {\it Proof of Proposition~\ref{Prop:mC5}.} The lower bound follows from the following example (much like Figure~\ref{Fig:K3C5}(b)). Let $G_{1}$ be copy of $K_{5m - 1}$ colored entirely with color $1$ and let $G_{2}$ be a copy of $K_{m - 1}$ colored entirely with color $2$. The desired example is then the graph $G = 2G_{1} \cup G_{2}$ of order $2(5m - 1) + (m - 1) = 11m - 3$ where all edges between these three subgraphs have color $2$. In $G$, there is no connected subgraph in color $1$ of order at least $5m$ and no copy of $mC_{5}$ in color $2$ so $G$ contains no graph in $\ch(mC_{5})$. Note that this lower bound is not an immediate corollary of Lemma~\ref{le:1.1} since we are considering a set of graphs as opposed to a single graph.

For the upper bound, consider a $2$-coloring $G$ of $K_{11m-2}$, say using red and blue. At least one of these colors must be connected so without loss of generality, suppose blue induces a connected subgraph. By Lemma~\ref{le:1.2} there is a monochromatic copy of $mC_5$. In order to avoid a monochromatic copy of a graph in $\ch(mC_5)$, this copy of $mC_5$ must be red and red must induce a disconnected subgraph. Suppose the red components have vertex sets $X_1, X_2, \ldots, X_s$ where $s\geq 2$ and suppose $|X_1|\geq \cdots \geq |X_s|$ so $\frac{11m - 2}{s} \leq |X_1| \leq 8m$. Note that all edges between these components $X_{i}$ must be blue.

We break the remainder of the proof into cases based on the orders of $X_{1}$ and $X_{2}$.

\setcounter{case}{0}
\begin{case}
$|X_1|\geq 5m$ and $|X_2|\geq 5m$.
\end{case}

Suppose $|X_i|=5m+k_i$ for $i=1, 2$. Note that $k_1+k_2\leq 11m-2-5m-5m=m-2$. By Lemma~\ref{le:1.3}, there is a blue matching $M_i=(k_i+1)K_2$ within $X_i$ for each $i=1, 2$. Since
$$3(k_i+1)+2(k_{3-i}+1)=2(k_i+k_{3-i}+2)+k_{i}+1\leq 2m+k_{i}+1\leq |X_{i}|$$
for $i=1, 2$, we can take $2(k_i+1)$ vertices in $X_{3-i}\setminus M_{3-i}$, $k_i+1$ vertices in $X_i\setminus M_i$ and $V(M_i)$ to form a blue copy of $(k_i+1)C_5$, denoted by $C'_i$. Note that we can choose $C'_{1}$ and $C'_{2}$ to be vertex disjoint. If $k_1+k_2+2=m$, then we are done since $C'_{1} \cup C'_{2}$ along with any combination of the many blue edges between them produces a blue graph in $\ch(mC_{5})$. 

Thus, suppose $\ell=m-k_1-k_2-2 >0$, so there are $(11m-2)-(5m+k_1)-(5m+k_2)=\ell$ vertices in the set $A=V(G)\setminus (X_1\cup X_2)$. Since
\beqs
|X_i\setminus (C'_1\cup C'_2)| &=& 5m+k_i-3(k_i+1)-2(k_{3-i}+1) \\
~ & = & 5m-2(k_1+k_2)-5\\
~ & = & 3m-1+2(m-k_1-k_2-2)\\
~ & \geq & 2\ell
\eeqs
for $i=1, 2$, we can choose $\ell$ vertices in $A$, $2\ell$ vertices in $X_1\setminus (C'_1\cup C'_2)$, and $2\ell$ vertices in $X_2\setminus (C'_1\cup C'_2)$ to form a blue copy of $\ell C_5$ that is disjoint from $C'_{1}$ and $C'_{2}$. Then we get a blue $mC_5$, and since the blue subgraph is connected, we have produced the desired blue graph in $\ch(mC_5)$.

\begin{case}
$|X_1|\geq 5m$ and $3m\leq |X_2|< 5m$.
\end{case}

Suppose without loss of generality that $|X_1|=5m+k_1$ and $|X_2|=5m-k_2$, where $1\leq k_2 \leq 2m$. If $k_1\geq m-1$, then there is blue copy of $M_1=mK_2$ within $X_1$ by Lemma \ref{le:1.3}. We can then find a blue copy of $mC_5$ by taking $2m$ vertices in $X_2$, $m$ vertices in $X_1\setminus M_1$, and $V(M_1)$, thereby creating the desired blue graph in $\ch(mC_5)$. Therefore, we may assume that $k_1\leq m-2$. By Lemma~\ref{le:1.3}, there is a blue copy of $M'_1=(k_1+1)K_2$ within $X_1$, and so we can find a blue copy of $(k_1+1)C_5$, denoted by $C'$, by using $2(k_1+1)$ vertices in $X_2$, $k_1+1$ vertices in $X_1\setminus M'_1$, and $V(M'_1)$.

If $1\leq k_2 \leq m$, then $A=V(G)\setminus (X_1\cup X_2)$ contains at least $11m-2-5m-k_1-5m+k_2=m-k_1+k_2-2$ vertices, $|X_1\setminus C'|=5m+k_1-3(k_1+1)\geq 2(m-k_1+k_2-2)$, and $|X_2\setminus C'|=5m-k_2-2(k_1+1)\geq 2(m-k_1+k_2-2)$. Thus, we can choose $m-k_1+k_2-2$ vertices in $A$, $2(m-k_1+k_2-2)$ vertices in $X_1\setminus C'$, and $2(m-k_1+k_2-2)$ vertices in $X_2\setminus C'$ to form a blue $(m-k_1+k_2-2)C_5$. Since $k_1+1+m-k_1+k_2-2\geq m$, we obtain a blue $mC_5$ and so we have the desired blue graph in $\ch(mC_5)$.

If $m< k_2 \leq 2m$, then $A=V(G)\setminus (X_1\cup X_2)$ contains $|A|=11m-2-5m-k_1-5m+k_2\geq m+1$ vertices. Then we can find a blue $mC_5$ by taking $2m$ vertices in $X_1$, $2m$ vertices in $X_2$, and $m$ vertices in $A$. This produces the desired blue graph in $c(mC_5)$.

\begin{case}
$|X_1|\geq 5m$ and $|X_2|< 3m$, or $|X_1|<5m$.
\end{case}

In this case, since $|V(G)\setminus X_1|\geq 11m-2-8m \geq 2m$, we may assume that $|X_1|\leq 6m-2$ since otherwise we can find a blue copy of $mC_5$ similarly as Case~2.

Let $A$ be a set of vertices such that $|A|=2m$ and, starting with $X_1$, all vertices of $X_i$ are selected before taking vertices from $X_{i+1}$. Then starting at the next set $X_i$, we choose $B$ in the same way such that $|B|=2m$. Let $C=\{\cup X_i: (A\cup B)\cap X_i=\emptyset\}$. It is easy to see that $|C|\geq m$. Thus we can find a blue $mC_5$ by taking all of $A$, all of $B$, and $m$ vertices from $C$. This produces the desired blue graph in $\ch(mC_5)$.
\hfill \qed 




The method in the proof of Proposition~\ref{Prop:mC5} can be used to consider Case (b) in rainbow $P_5$-free colorings, since the structures are analogous. It is also similar to the strategy when considering bipartite graphs in Lemma~\ref{Lem:Bip}.

\begin{theorem}\labelz{Thm:AlmostBip} Let $G=G(S,T)$ be a bipartite graph, and let $H$ be the graph
obtained from $G$ by adding an edge within $S$. For any integer $m\geq 2$, we have $gr_k(P_5 : mH)=R_3(mH).$
\end{theorem}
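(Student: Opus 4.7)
The lower bound $gr_k(P_5:mH)\ge R_3(mH)$ is immediate because any $3$-color sharpness example for $R_3(mH)$ uses only three colors and contains no rainbow $P_5$. For the upper bound, let $G$ be a $k$-coloring of $K_{R_3(mH)}$ with no rainbow $P_5$, and suppose toward contradiction that $G$ has no monochromatic $mH$. If $k\le 3$, the definition of $R_3(mH)$ gives a contradiction immediately, so assume $k\ge 4$. If $H$ happens to be bipartite (the extra edge $uv$ closes no odd cycle, e.g.\ when $u$ and $v$ lie in different components of $G(S,T)$), then $mH$ is bipartite and Lemma~\ref{Lem:Bip} applies; so we may assume $\chi(H)=3$. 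Since $mH$ is disconnected, Lemma~\ref{Lem:P5Disconnected+b} places $G$ in Case~(b) of Theorem~\ref{Thm:P5Classify}, giving a partition $V_2,\ldots,V_k$ with $k-1\ge 3$ parts.

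I then follow the blueprint of Lemma~\ref{Lem:Bip}: choose $U\subseteq\{2,\ldots,k\}$ and form $A_1=\bigcup_{i\in U}V_i$, $A_2=V(G)\setminus A_1$ with $|A_1|\ge|A_2|$ and $|A_1|-|A_2|$ minimal, so that Claim~\ref{cl:1} yields $|A_2|\ge|A_1|/2$ whenever $|U|\ge 2$. Recolor via ``color~$1$ to red, colors in $U$ to blue, the remaining colors to green'' to obtain $G'$, and use $|G'|=R_3(mH)$ to extract a monochromatic copy $M\cong mH$ in $G'$. Since $G$ contains no color-$1$ copy of $mH$, the copy $M$ is not red; and if $|U|=1$, a blue $M$ would sit entirely inside a single $V_j$ in color~$j$, another contradiction. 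Hence either $|A_1|\ge m|H|$ (blue $M$, with $|U|\ge 2$) or $|A_2|\ge m|H|$ (green $M$), and the balance/Claim~\ref{cl:1} forces the other side to have at least $m|H|/2$ vertices.

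The new ingredient is to embed $mH$ inside $G$ using color~$1$. Since $\chi(H)=3$, $H$ admits the proper $3$-coloring with classes $T$, $S\setminus\{v\}$, $\{v\}$ of sizes $t,s-1,1$, and consequently $mH$ has a proper $3$-coloring with class sizes $mt$, $m(s-1)$, $m$. The color-$1$ subgraph of $G$ contains the complete $(k-1)$-partite graph on $V_2,\ldots,V_k$, so it suffices to locate three distinct parts $V_a,V_b,V_c$ with $|V_a|\ge mt$, $|V_b|\ge m(s-1)$, $|V_c|\ge m$; placing the three color classes of $mH$ into $V_a,V_b,V_c$ then yields the desired monochromatic $mH$ in color~$1$ and the final contradiction.

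The main obstacle is guaranteeing the existence of such a triple of parts from only the crude size bounds produced above. I plan to handle this by a case analysis on the ordered sizes of the $V_i$'s, modeled on the proof of Proposition~\ref{Prop:mC5} (the paragraph preceding the theorem explicitly flags that method as the template). In the balanced subcases the bounds on $|A_1|,|A_2|$ combined with Claim~\ref{cl:1} force three of the $V_i$'s to be simultaneously large enough. In the unbalanced subcase, where one part $V_j$ holds nearly all of the vertices, the edges inside $V_j$ use only colors~$1$ and $j$, and I would apply a Ramsey/extremal argument inside $V_j$ to either produce a monochromatic $mH$ there directly (in color~$j$), or extract a color-$1$ matching of size $m$ (together with the vertices supporting it) that can play the role of the missing small part in the multipartite embedding of $mH$ in color~$1$. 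Combining the subcases yields a monochromatic $mH$ in $G$ and completes the proof.
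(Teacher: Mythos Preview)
Your high-level architecture is right and matches the paper: reduce to Case~(b), aim for a color-$1$ copy of $mK_{1,s-1,t}\supseteq mH$ by splitting the parts $V_2,\dots,V_k$ into three blocks of sizes $\ge mt,\ m(s-1),\ m$, and in the ``one huge part'' situation appeal to a Ramsey/matching argument inside that part. That last idea is exactly the paper's engine.

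Where the proposal diverges and develops a real gap is in the quantitative input. The paper does \emph{not} go through the $A_1/A_2$ split and recoloring of Lemma~\ref{Lem:Bip}. Instead it proves two explicit facts and uses them globally:
\begin{itemize}
\item a constructive lower bound $R_3(mH)\ge 3m(s+t)-2$, so that $|G|\ge 3m(s+t)-2$;
\item the off-diagonal value $R_2(mH,nK_2)=m(s+t)+n-1$, which with $n=m$ gives the global cap $|V_2|\le m(s+t)+m-2$ (not merely a tool for the unbalanced case).
\end{itemize}
These two numbers drive every case of the greedy block-building: in the paper's Cases~2 and~3 the argument that the ``leftover'' block has at least $m$ vertices uses both the $3m(s+t)-2$ total and the $m(s+t)+m-2$ cap on the largest part. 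Your recoloring route only yields $|A_1|\ge m(s+t)$ and (via Claim~\ref{cl:1}) $|A_2|\ge m(s+t)/2$, i.e.\ a total of roughly $\tfrac{3}{2}m(s+t)$, which is too weak to run the same greedy split; and you invoke the matching argument only in the ``unbalanced'' subcase, whereas the paper needs the resulting cap on $|V_2|$ in all cases. In short, the balanced subcase as you describe it (``the bounds on $|A_1|,|A_2|$ force three of the $V_i$'s to be simultaneously large enough'') does not follow from those bounds alone.

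The fix is straightforward and is what the paper does: drop the $A_1/A_2$ detour, prove $R_3(mH)\ge 3m(s+t)-2$ by the obvious three-block $3$-coloring of $K_{3m(s+t)-3}$, prove $R_2(mH,nK_2)=m(s+t)+n-1$ by a short induction on $n$ (your ``Ramsey/extremal argument inside $V_j$'' made precise), deduce $|V_2|\le m(s+t)+m-2$, and then carry out the greedy three-block selection on the ordered parts.
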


\begin{proof}
Suppose $k\geq 4$ and let $G$ be a rainbow $P_5$-free $k$-coloring of $K_{R_3(mH)}$. By Lemma~\ref{Lem:P5Disconnected+b}, we may assume $G$ satisfies Case (b) of Theorem~\ref{Thm:P5Classify}. Then $V(G)$ has a partition $V_2, V_3, \ldots, V_k$, where $|V_2|\geq |V_3|\geq \cdots \geq |V_k|\geq 2$.

Let $|S|=s$, $|T|=t$, and we may assume that $s+t\geq 3$. First, we have the following results concerning Ramsey numbers.

\begin{claim}\label{cl:1b}
$R_3(mH)\geq 3m(s+t)-2$.
\end{claim}

\begin{proof}
Let $U_1, U_2, U_3$ be three vertex sets with $|U_1|=|U_2|=|U_3|=m(s+t)-1$.
For $i=1, 2, 3$, we color the edges within $U_i$ with color $i$, and color the edges between $U_1, U_2, U_3$
such that $c(U_1, U_2)=3$, $c(U_2, U_3)=1$ and $c(U_3, U_1)=2$. The resulting coloring is a 3-coloring
of $K_{3m(s+t)-3}$ without monochromatic $mH$.
\end{proof}

\begin{claim}\labelz{cl:2}
For $m\geq n \geq 1$, $R_2(mH, nK_2)=m(s+t)+n-1$.
\end{claim}

\begin{proof}
For the lower bound, let $U_1, U_2$ be two vertex sets with $|U_1|=m(s+t)-1$ and $|U_2|=n-1$. We use red to color the edges within $U_1$ and use blue to color all the remaining edges. The resulting coloring of $K_{m(s+t)+n-2}$ contains no red $mH$ and no blue $nK_2$.

For the upper bound, we will prove by induction on $n$. For $n=1$, it is trivial. Suppose we have a blue matching $M=(n-1)K_2$ in a 2-coloring $\Gamma$ of $K_{m(s+t)+n-1}$ with red and blue. If there is no blue $nK_2$, then every edge $e_i=u_iv_i$ in $M$ contains a vertex, say $u_i$, adjacent in red to all but at most one vertex of $X=V(\Gamma)\setminus V(M)$, and $X$ induces a red complete graph. Since $|X|=m(s+t)+n-1-2(n-1)\geq (n-1)(s+t-1)+s+t$, we can find $n-1$ pairwise disjoint red $H$ using $\{u_1, u_2, \ldots, u_{n-1}\}$ and $(n-1)(s+t-1)$ vertices in $X$. Since there are $m(s+t)+n-2-2(n-1)-(n-1)(s+t-1)=(m-n+1)(s+t)$ vertices in the remainder of $X$, we can find a red $(m-n+1)H$. The result follows.
\end{proof}

Note that since $|G|=R_3(mH)$, it is easy to see that $|V_3\cup V_4\cup \cdots \cup V_k|\geq |mH|=m(s+t)$. Now we can give an upper bound of $|V_2|$.

\begin{claim}\labelz{cl:3}
$|V_2|\leq m(s+t)+m-2$.
\end{claim}

\begin{proof}
If $|V_2|\geq m(s+t)+m-1$, then by Claim~\ref{cl:2} there is a matching $M=mK_2$ in color 1 within $V_2$ for avoiding a $mH$ in color 2. Hence, we can form a $mH$ in color 1 by taking $V(M)$, $m(s-2)$ vertices in $V_2\setminus V(M)$ and $mt$ vertices in $V_3\cup \cdots \cup V_k$, a contradiction.
\end{proof}

By Claims~\ref{cl:1b} and~\ref{cl:3}, we have $2\leq |V_k|\leq \cdots \leq |V_2|\leq m(s+t)+m-2$ and $|V_4\cup \cdots \cup V_k|\geq 3m(s+t)-2-2(m(s+t)+m-2)=m(s+t)-2m+2\geq m+2$. In the following, we will divide the rest of the proof into three cases.

\setcounter{case}{0}
\begin{case}
$|V_2|\geq m(s+t)$ and $|V_3|\geq mt$.
\end{case}

Since $|V_4\cup \cdots \cup V_k|\geq m+2$, we can form a $mH$ in color 1 by taking $m(s-1)$ vertices in $V_2$, $mt$ vertices in $V_3$ and $m$ vertices in $V_4\cup \cdots \cup V_k$, a contradiction.

\begin{case}
$|V_2|\geq m(s+t)$ and $|V_3|\leq mt-1$.
\end{case}

Firstly, we choose $A\subseteq V_2$ with $|A|=m(s-1)$. Secondly, we choose $B\in V_3\cup V_4\cup \cdots \cup V_k$ with $|B|=mt$ starting with $V_3$, and all the vertices of $V_i$ are selected before taking vertices from $V_{i+1}$ for $i\geq 3$. Suppose for some $j>3$ we have $B\cap V_j\neq \emptyset$ and $B\cap V_{j+1}=\emptyset$, i.e., $|V_3\cup \cdots \cup V_{j-1}|\leq mt-1$ and $|V_3\cup \cdots \cup V_{j}|\geq mt$. Since $|V_j|\leq |V_3|\leq mt-1$, we have $|V_3\cup \cdots \cup V_{j}|=|V_3\cup \cdots \cup V_{j-1}|+|V_j|\leq 2(mt-1)$. Thus, by Claims \ref{cl:1b} and \ref{cl:3} we have
\begin{align*}
  |V_{j+1}\cup \cdots \cup V_{k}| \geq \ & 3m(s+t)-2-(m(s+t)+m-2)-2(mt-1) \\
    = \ & 2ms-m+2\geq m.
\end{align*}
We can choose $C\subseteq V_{j+1}\cup \cdots \cup V_{k}$ such that $|C|=m$. Then $A$, $B$ and $C$ form a $mH$ in color 1, a contradiction.

\begin{case}
$|V_2|\leq m(s+t)-1$.
\end{case}

In this case, we have $|V_4\cup \cdots \cup V_k|\geq 3m(s+t)-2-2(m(s+t)-1)=m(s+t)$. Let $x=\mbox{max}\{s-1,t\}$ and $y=\mbox{min}\{s-1,t\}$. We shall show that $G$ contains a $mK_{1,s-1,t}$ in color 1.

If $|V_2|\geq mx$, then we choose $A\subseteq V_2$ with $|A|=mx$ and let $l=3$. If $|V_2|\leq mx-1$, then we choose $A\in V_2\cup V_3\cup \cdots \cup V_k$ with $|A|=mx$ starting with $V_2$, and all the vertices of $V_i$ are selected before taking vertices from $V_{i+1}$ for $i\geq 2$. Suppose for some $j_1>2$ we have $A\cap V_{j_1}\neq \emptyset$ and $A\cap V_{j_1+1}=\emptyset$, i.e., $|V_2\cup \cdots \cup V_{j_1-1}|\leq mx-1$ and $|V_2\cup \cdots \cup V_{j_1}|\geq mx$. Since $|V_{j_1}|\leq |V_2|\leq mx-1$, we have $|V_2\cup \cdots \cup V_{j_1}|=|V_2\cup \cdots \cup V_{j_1-1}|+|V_{j_1}|\leq 2(mx-1)$. Now we let $l=j_1+1$.

If $|V_l|\geq my$, then we choose $B\subseteq V_l$ with $|B|=my$ and let $l'=l+1$. If $|V_l|\leq my-1$, then we choose $B\in V_l\cup \cdots \cup V_k$ with $|B|=my$ starting with $V_l$, and all the vertices of $V_i$ are selected before taking vertices from $V_{i+1}$ for $i\geq l$. Suppose for some $j_2>l$ we have $B\cap V_{j_2}\neq \emptyset$ and $B\cap V_{j_2+1}=\emptyset$, i.e., $|V_l\cup \cdots \cup V_{j_2-1}|\leq my-1$ and $|V_l\cup \cdots \cup V_{j_2}|\geq my$. Since $|V_{j_2}|\leq |V_l|\leq my-1$, we have $|V_l\cup \cdots \cup V_{j_2}|=|V_l\cup \cdots \cup V_{j_2-1}|+|V_{j_2}|\leq 2(my-1)$. Now we let $l'=j_2+1$.

Note that we have $|V_2\cup \cdots \cup V_{l'-1}|\leq \mbox{max}\{2(m(s+t)-1), m(s+t)-1+2(mx-1)\}$. Thus $|V_{l'}\cup \cdots \cup V_{k}|\leq 3m(s+t)-2-\mbox{max}\{2(m(s+t)-1), m(s+t)-1+2(mx-1)\}\geq m$. We can choose $C\subseteq V_{l'}\cup \cdots \cup V_{k}$ such that $|C|=m$. Then $A$, $B$ and $C$ form a $mK_{1,s-1,t}$ in color 1, which contains a $mH$, a contradiction.
\end{proof}

Next some results about complete graphs.

\begin{theorem}\labelz{Thm:nKr} 
For $r \geq 3$ and $2\leq n\leq r-1$, we have $nK_r \notin \mathscr{H}$.
\end{theorem}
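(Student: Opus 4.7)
The plan is to invoke Lemma~\ref{Lem:CH}: for $2 \leq n \leq r - 1$ it suffices to verify $R_3(nK_r) \geq R_2(\ch(nK_r))$, after which the lemma yields $gr_k(P_5 : nK_r) = R_3(nK_r)$ and hence $nK_r \notin \mathscr{H}$. I aim to show that both quantities equal $N := (r-1)(nr-1) + n$, which specializes to Proposition~\ref{Prop:mK3}'s value $7m - 2$ when $r = 3$.

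For the lower bound, I would exhibit a single $2$-coloring of $K_{N-1}$ (which doubles as a $3$-coloring using only $2$ of the colors) that witnesses both inequalities simultaneously. Partition the vertex set into $r$ parts: $r - 1$ parts of order $nr - 1$ and one part of order $n - 1$. Color edges inside each part by color $1$ and edges between parts by color $2$. In color $1$, each of the $r$ components is a clique of order at most $nr - 1 < nr$, so no $nK_r$ appears. In color $2$, the complete $r$-partite graph $K_{nr-1, \ldots, nr-1, n-1}$ admits at most $n - 1$ pairwise disjoint copies of $K_r$ (every $K_r$ must take a vertex from the smallest part of size $n - 1$), so again no $nK_r$. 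Thus no monochromatic $\ch(nK_r)$ exists, yielding $R_3(nK_r), R_2(\ch(nK_r)) \geq N$.

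For the matching upper bound $R_2(\ch(nK_r)) \leq N$, I would follow the case-analysis template of Proposition~\ref{Prop:mC5}. Take a $2$-coloring of $K_N$ with no monochromatic $\ch(nK_r)$. One color, say blue, spans a connected subgraph and therefore contains no $nK_r$ (otherwise blue $\ch(nK_r)$). Let $X_1, \ldots, X_s$ be the red components with $|X_1| \geq \cdots \geq |X_s|$; each $X_i$ avoids red $nK_r$. A case analysis on the $|X_i|$'s---using iterated $R_2(K_r)$-type bounds inside the large components, together with the complete $s$-partite blue cross-structure between components---produces either a red $nK_r$ inside some $X_i$, or a blue $nK_r$ assembled by choosing one vertex from each of $r$ sufficiently large red components. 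Either outcome contradicts our assumption. The hypothesis $n \leq r - 1$ is central here: it matches the $r$-partite extremal structure (so the smallest part of size $n - 1$ is the bottleneck), and it ensures that forcing a blue $nK_r$ across components requires at least $r$ red components each of size $\geq n$. The main obstacle is precisely this case analysis, especially at the boundary where $|X_1|$ sits near the Ramsey threshold $R_2(K_r)$ for its induced $2$-coloring; the delicate arithmetic of balancing $|X_1| \leq R_2(K_r) - 1$ against the global count $N$ drives the inductive-style case split.
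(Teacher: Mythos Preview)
Your route and the paper's both invoke Lemma~\ref{Lem:CH}, but through different intermediate thresholds, and the paper's choice is what makes its argument close.

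The paper works with $M:=(r-1)(R_2(K_r)-1)+n$. For $R_3(nK_r)\geq M$ it builds a genuine $3$-coloring: $r-1$ disjoint copies of an extremal $2$-coloured $K_{R_2(K_r)-1}$ with no monochromatic $K_r$, a green $K_{n-1}$, and all cross edges green. For $R_2(\ch(nK_r))\leq M$ it assumes red is connected, removes a maximal packing of (at most $n-1$) red $K_r$'s, obtains a large $G'$ with no red $K_r$, greedily extracts many pairwise disjoint blue $K_r$'s (the numerics use explicit lower bounds on $R_2(K_r)$, handled separately for $r=4,5$ and $r\geq 6$), and then applies $R(K_r,T_n)=(r-1)(n-1)+1$ to the auxiliary graph on one vertex per blue clique to link $n$ of them into a connected blue subgraph. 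Your smaller threshold $N=(r-1)(nr-1)+n$ would, if established, pin down $R_2(\ch(nK_r))$ exactly; your $2$-colored lower-bound construction is correct and elegant.

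The gap is the upper bound $R_2(\ch(nK_r))\leq N$. Your ``case analysis on the $|X_i|$'s'' is not carried out, and the obstacles are concrete. When $s=1$ (both colours connected) you need $R_2(nK_r)\leq N$, which you neither state nor justify. More generally, the only a priori bound on a red component is $|X_1|\leq R_2(nK_r)-1$ (no monochromatic $nK_r$ inside), not $|X_1|\leq R_2(K_r)-1$ as you write; since $R_2(nK_r)$ may be large relative to $nr-1$, the ``delicate arithmetic'' you anticipate is not the arithmetic you describe. The paper deliberately avoids these issues by choosing the looser threshold $M$: the slack $R_2(K_r)-1\gg nr-1$ (for $r\geq 4$) is precisely what powers the greedy extraction and the $R(K_r,T_n)$ finishing step. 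Your programme may be completable, but it needs an argument of a different shape than the $mC_5$ template you invoke, and as written the key step is missing.
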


\begin{proof}
First a claim about the $2$-color Ramsey number for $\ch(nK_{r})$.

\begin{claim}\labelz{cl:1.1}
$R_2(\ch(nK_r))\leq (r-1)(R_2(K_r)-1)+n$.
\end{claim}

\begin{proof}
In the case $r=3$, the result follows from \cite{MR3539092} where it was shown that $R_2(\ch(mK_3))=7m-2$ for $m\geq 2$. Thus, we may assume that $r\geq 4$. For a contradiction, suppose that $G$ is a $2$-coloring of a complete graph $K_N$ using red and blue without monochromatic $\ch(nK_r)$, where $N=(r-1)(R_2(K_r)-1)+n=(r-1)R_2(K_r)-r+1+n$. Without loss of generality, we may assume that red is connected. Then there are at most $n-1$ disjoint red copies of $K_r$, and denote these by $R_1, R_2, \ldots, R_{t}$, where $t\leq n-1$. Let $G'=G \setminus (R_1 \cup R_2 \cup \dots \cup R_t)$, and note that there is no red copy of $K_r$ in $G'$. Moreover, we have
\beqs
 |G'| & \geq & (r-1)R_2(K_r)-r+1+n-(n-1)r \\
 &  = & (r-2)R_2(K_r)+R_2(K_r)-nr+n+1.
\eeqs

\setcounter{case}{0}
\begin{case}
$r\geq 6$.
\end{case}

From the known lower bounds of Ramsey numbers for small complete graphs, we have $R_2(K_r)\geq 2r^2$ for $6\leq r\leq 14$, and from \cite{MR0019911}, we have $R_2(K_r)> \frac{r}{e\sqrt{2}}2^{r/2}\geq 2r^2$ for $r\geq 15$. Thus, for all $r\geq 6$ we have $R_2(K_r)\geq 2r^2$. Therefore, we have
\beqs
|G'| & \geq & (r-2)R_2(K_r)+2r^2-nr+n+1 \\
~ & \geq & (r-2)R_2(K_r) \\
~ & \geq & R_2(K_r) +2r^2(r-3) \\
~ & \geq & R_2(K_r).
\eeqs
Since there is no red copy of $K_{r}$ in $G'$, there must be a blue copy of $K_r$ in $G'$. In fact, we can find $2r(r-3)$ disjoint copies of blue $K_r$ in $G'$ by a simple greedy application of the same argument. Call these blue cliques $B_1, B_2, \ldots, B_{2r(r-3)}$. Consider the $2$-coloring $S$ of $K_{2r(r-3)}$ by taking one vertex from each clique $B_i$ for $i \in \{1, 2, \ldots, 2r(r-3)\}$. Since 
$$
R(K_r, T_n)=(r-1)(n-1)+1 \leq 2r(r - 3)
$$
for any tree $T_n$ of order $n$ (see \cite{MR0321783}), there is a blue copy of $T_n$ in $S$. This gives a blue copy of a graph in $\ch(nK_r)$ in $G$, a contradiction.

\begin{case}
$r = 5$.
\end{case}

In this case, we have $R_2(K_5)\geq 43\geq 8r$. Thus
\beqs
|G'| & \geq & (r-2)R_2(K_r)+R_2(K_r)-nr+n+1 \\
~ & = & (r-2)R_2(K_r)+(8-n)r+n+1 \\
~ & \geq & R_2(K_r) +(r-3)R_2(K_r) \\
~ & \geq & R_2(K_r)+8r(r-3) \\
~ & = & R_2(K_r)+16r.
\eeqs
Therefore, we can greedily find $16$ vertex disjoint blue copies of blue $K_5$ in $G'$. Since $R(K_5, T_4)=13<16$, we can find the desired blue copy of a graph in $\ch(nK_5)$ by a same argument as above.

\begin{case}
$r= 4$.
\end{case}

In this case, we have $R_2(K_4)=18$. Thus we have
\beqs
|G'| & \geq & (r-2)R_2(K_r)+R_2(K_r)-nr+n+1 \\
~ & = &(r-3)R_2(K_r)+2R_2(K_r)-n(r-1)+1 \\
~ & \geq & R_2(K_r)+2\cdot 18-9+1 \\
~ & = & R_2(K_r)+28.
\eeqs
Therefore, we can greedily find $7$ vertex disjoint blue copies of $K_5$ in $G'$. Since $R(K_4, T_3)=7$, we can find the desired blue copy of a graph in $\ch(nK_4)$ by a same argument as above.
\end{proof}

In the remainder of the proof, we show that $R_3(nK_r)\geq (r-1)(R_2(K_r)-1)+n\geq R_2(\ch(nK_r))$ by constructing a 3-coloring of $K_{N'}$ where $N'=(r-1)(R_2(K_r)-1)+n-1$, which contains no monochromatic copy of $nK_{r}$. Let $T$ be a $2$-coloring of $K_{R_{2}(K_r)-1}$ without monochromatic $K_r$ using red and blue. Let $T_1, T_2, \ldots, T_{r-1}$ be $r-1$ disjoint copies of $T$. Let $T_r$ be a monochromatic $K_{n-1}$ with green. We form a $K_{N'}$ by adding only green edges between $T_1, T_2, \ldots, T_r$. Since each green $K_r$ must contains a vertex of $T_r$, there is no green copy of $nK_r$, and clearly there is no red or blue copy of $nK_r$. Thus we have $R_3(nK_r)\geq R_2(\ch(nK_r))$, and so $gr_k(P_5 : nK_r)=R_3(nK_r)$.
\end{proof}

\begin{theorem}\labelz{th:2} 
For any connected graph $G$, we have $2G \notin \mathscr{H}$.
\end{theorem}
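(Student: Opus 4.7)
The plan is to handle the bipartite case first using Lemma~\ref{Lem:Bip} and then apply Lemma~\ref{Lem:CH} to the non-bipartite case. If $G$ is bipartite then so is $2G$, so Lemma~\ref{Lem:Bip} gives $2G \notin \mathscr{H}$ immediately; I will therefore assume $G$ is non-bipartite with $\chi := \chi(G) \geq 3$. By Lemma~\ref{Lem:CH}, it then suffices to establish the inequality $R_3(2G) \geq R_2(\ch(2G))$; once this is shown, $gr_k(P_5 : 2G) = R_3(2G)$ follows.

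For the lower bound on $R_3(2G)$, I would mimic the final construction in the proof of Theorem~\ref{Thm:nKr}. Let $T$ be an extremal $2$-coloring of $K_{R_2(G)-1}$ (in colors $1$ and $2$) with no monochromatic $G$. Take $\chi-1$ vertex-disjoint copies $T_1,\dots,T_{\chi-1}$ of $T$ together with one extra vertex $v$, and color every edge between distinct $T_j$'s as well as every edge incident to $v$ with a third color $3$. Since colors $1$ and $2$ are confined to individual $T_j$'s, neither contains even one copy of $G$. The color-$3$ subgraph is the complete multipartite graph $K_{R_2(G)-1,\dots,R_2(G)-1,1}$ on $\chi$ parts, and because $\chi(G) = \chi$, any embedding of $G$ must meet each of the $\chi$ parts, and in particular the singleton $\{v\}$, so two vertex-disjoint copies of $G$ cannot coexist in color $3$. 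This yields $R_3(2G) \geq (\chi-1)(R_2(G)-1)+2$.

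For the matching upper bound on $R_2(\ch(2G))$, I would generalise Claim~1.1 from the proof of Theorem~\ref{Thm:nKr} to arbitrary connected $G$. Take a $2$-coloring of $K_N$ with $N=(\chi-1)(R_2(G)-1)+2$ and assume, without loss of generality, that the red subgraph is connected. If red contains two vertex-disjoint copies of $G$ then a connected red subgraph of $\ch(2G)$ is produced and we are done; so assume red contains at most one copy of $G$ and delete it, leaving a graph $G'$ of order $\geq N-|G|$ with no red $G$. I then plan to extract many vertex-disjoint blue copies of $G$ in $G'$ by repeated application of the definition of $R_2(G)$, and invoke the Chv\'{a}tal tree-clique Ramsey bound $R(K_{\chi},T)\leq(\chi-1)(|T|-1)+1$ on the skeleton obtained by picking one vertex from each blue copy; the resulting blue tree, combined with the blue $G$'s it meets, produces a connected blue subgraph containing $2G$.

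The hard part will be making this upper-bound argument work uniformly across all non-bipartite connected $G$: the size estimates needed for the greedy extraction of blue $G$'s and for the subsequent tree-connection step depend delicately on the relative magnitudes of $|G|$, $\chi$, and $R_2(G)$, and will require a case analysis analogous to the split between $r\geq 6$, $r=5$, and $r=4$ in Theorem~\ref{Thm:nKr}. The Chv\'{a}tal--Harary lower bound $R_2(G)\geq(\chi-1)(|G|-1)+1$ from Lemma~\ref{le:1.1} should supply the needed slack in these estimates. Once both bounds match at $(\chi-1)(R_2(G)-1)+2$, Lemma~\ref{Lem:CH} delivers $gr_k(P_5:2G)=R_3(2G)$ and hence $2G\notin \mathscr{H}$.
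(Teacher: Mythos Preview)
Your plan breaks at $\chi(G)=3$. The target inequality $R_2(\ch(2G))\le(\chi-1)(R_2(G)-1)+2$ is simply false in that range: take $G=C_5$, where $\chi=3$ and $R_2(C_5)=9$, so your target is $2\cdot 8+2=18$, yet Proposition~\ref{Prop:mC5} gives $R_2(\ch(2C_5))=20$. No case analysis can rescue a false inequality, so Lemma~\ref{Lem:CH} with your chosen threshold cannot cover all $3$-chromatic $G$. The paper therefore does \emph{not} route $\chi=3$ through Lemma~\ref{Lem:CH} at all; instead it works directly inside a Case~(b) coloring, recolors to three colors, and, once a green $2G$ is forced to lie in two distinct parts $V_3,V_4$, uses the tripartite structure of $G$ to assemble two disjoint copies of $G$ in color~$1$ across $V_2,V_3,V_4$.

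For $\chi\ge 4$ your outline is closer in spirit, but the endgame you propose is weaker than what the paper does. Your tree--clique step needs a blue edge in the skeleton on the representatives of the blue $G$'s; since $H'$ only guarantees ``no red $G$'' rather than ``no red $K_\chi$'', you would need at least $R(G,K_2)=|G|$ disjoint blue copies before a blue edge is forced, and the greedy count then requires roughly $R_2(G)+(|G|-1)|G|$ vertices, which your value of $N$ need not supply. The paper's finish avoids this: it extracts only $\chi$ blue copies $B_1,\dots,B_\chi$, argues that all cross-edges between them must be red (else two blue copies are already connected), and then reads off two vertex-disjoint \emph{red} copies of $G$ using a proper $\chi$-coloring of $G$ and its cyclic shift across the $B_i$'s. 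That idea is what makes the bound $(\chi-1)(R_2(G)-1)+2s(G)$ work, and the paper's lower-bound construction for $R_3(2G)$ is correspondingly strengthened to use a green $K_{2s(G)-1}$ rather than a single vertex.
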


\begin{proof}
For a contradiction, suppose $\Gamma$ is a rainbow $P_5$-free $k$-coloring
of $K_{R_3(2G)}$ without a monochromatic copy of $2G$. If $\chi(G)\leq 2$, then $2G$ is a bipartite graph, and the result is true by Lemma~\ref{Lem:Bip}.

If $\chi(G)=3$, then $G=G(X, Y, Z)$ is a tripartite graph. By Lemma~\ref{Lem:P5Disconnected+b}, we may assume that $\Gamma$ satisfies Case~(b) of Theorem~\ref{Thm:P5Classify}. Let $V_2, V_3, \ldots, V_k$ be a partition of $V(\Gamma)$ such that there are only colors $1$ or $i$ within $V_i$ for $i\in \{2, 3, \ldots, k\}$ and only color $1$ on edges between these parts. Without loss of generality, suppose that $|V_2|=\max \{|V_2|, |V_3|, \ldots, |V_k|\}$.
Now we recolor the edges of $\Gamma$ to make a $3$-coloring such that
\bd
\item{\rm (1)} all edges of color $1$ become red;
\item{\rm (2)} all edges of color $2$ become blue; and
\item{\rm (3)} all edges of other colors become green.
\ed

Let $\Gamma'$ denote the resulting $3$-coloring. Since $\Gamma$ contains no monochromatic $2G$ and $|\Gamma'|=|\Gamma|=R_3(2G)$, there is a monochromatic copy of $2G$ in $\Gamma'$ and to avoid such a subgraph in $\Gamma$, this subgraph must be green. Moreover, these two green copies of $G$ must appear in distinct parts, say $V_3$ and $V_4$. Without loss of generality, suppose $|X|\geq |Y|\geq |Z|$. Since $|V_2|\geq |V_3|\geq 3|Z|$, we can construct two disjoint copies of $G$ in color $1$ in $\Gamma$, one by taking $|Z|$ vertices in $V_2$, $|X|$ vertices in $V_3$, and $|Y|$ vertices in $V_4$, and the other one by taking $|Z|$ vertices in $V_2$, $|Y|$ vertices in $V_3$, and $|X|$ vertices in $V_4$, a contradiction.

We may therefore assume $\chi(G)\geq 4$. We have the following claim.

\noindent\begin{claim}\labelz{cl:2.1}
If $\chi (G)\geq 4$, then $R_2(\ch(2G)) \leq (\chi (G)-1)(R_2(G)-1)+2s(G)$, where $s(G)$ is the minimum number of vertices in some color class over all proper vertex-colorings in $\chi (G)$ colors.
\end{claim}

\begin{proof}
For a contradiction, suppose $H$ is a 2-coloring of $K_N$ using red and blue without monochromatic copy of a graph in $\ch(2G)$, where $N=(\chi (G)-1)(R_2(G)-1)+2s(G)=(\chi (G)-1)R_2(G)-\chi(G)+1+2s(G)$. Without loss of generality, we may assume that red is connected. Then there is at most one red copy of $G$, denoted by $R$. Let $H'=H-R$, and so there is no red copy of $G$ in $H'$. Moreover, we have
\beqs
|H'| & \geq & (\chi (G)-1)R_2(G)-\chi(G)+1+2s(G)-|G| \\
~ & = & (\chi (G)-3)R_2(G)+2R_2(G)-\chi(G)+1+2s(G)-|G| \\
~ & \geq & (\chi (G)-3)R_2(G)+2[(\chi(G)-1)(|G|-1)+s(G)]-\chi(G)+1\\
~ & ~ & + 2s(G)-|G| \\
~ & \geq & (\chi (G)-3)R_2(G)+6(|G|-1)-\chi(G)-|G|+4s(G)+1 \\
~ & \geq & (\chi (G)-3)R_2(G)+4|G|+4s(G)-5 \\
~ & \geq & (\chi (G)-3)R_2(G)+3|G|.
\eeqs
Thus we can greedily find $\chi(G)$ disjoint blue copies of $G$ in $H'$, say with vertex sets $B_1, B_2, \ldots, B_{\chi(G)}$. In order to avoid a blue copy of a graph in $\ch(2G)$, there must be only red edges in between the sets of vertices $B_1, B_2, \ldots, B_{\chi(G)}$.

Consider a proper vertex-coloring of $G$ with $\chi(G)$ colors, and let $c_i(G)$ be the number of vertices colored by color $i$ for $i\in \{1, 2, \ldots, \chi(G)\}$. We first form a red copy of $G$ in $H$, denoted by $R_1$, by taking $c_i(G)$ vertices in $V(B_i)$ for each $i\in \{1, 2, \ldots, \chi(G)\}$. Then we form a second red copy of $G$ in $H\setminus R_1$, denoted by $R_2$, by selecting $c_{i+1}(G)$ vertices in $V(B_{i}\setminus R_1)$ for $i\in \{1, 2, \ldots, \chi(G)\}$ (where $c_{\chi(G)+1}=c_1$). Therefore, there is a red copy of a graph in $\ch(2G)$ within $H$, a contradiction.
\end{proof}

In the remainder of the proof, we will show that $R_3(2G)\geq (\chi (G)-1)(R_2(G)-1)+2s(G)\geq R_2(\ch(2G))$ by constructing a $3$-coloring of $K_{N'}$, where $N'=(\chi (G)-1)(R_2(G)-1)+2s(G)-1$. Let $T$ be a $2$-coloring of $K_{R_{2}(G)-1}$ without a monochromatic copy of $G$, say using colors red and blue. Let $T_1, T_2, \ldots, T_{\chi(G)-1}$ be $\chi(G)-1$ disjoint copies of $T$. Let $T_{\chi(G)}$ be a monochromatic $K_{2s(G)-1}$ in green. We form a $K_{N'}$ by adding all green edges between the disjoint graphs $T_1, T_2, \ldots, T_{\chi(G)}$. Since each green copy of $G$ must contains at least $s(G)$ vertices from $T_{\chi(G)}$, there can be no green copy of $2G$, and clearly there is no red or blue copy of $2G$ since there is no copy of $G$ in either color. Thus, we have $R_3(2G)\geq R_2(\ch(2G))$, so $gr_k(P_5 : 2G)=R_3(2G)$, as claimed.
\end{proof}

\section{Conclusion}

In light of Lemma~\ref{Lem:P5Disconnected+b}, since almost all graphs are connected, we get the following immediate corollary.

\begin{corollary}
For almost all graphs $H$, we have $gr_{k}(P_{5} : H) = R_{3}(H)$.
\end{corollary}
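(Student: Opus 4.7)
The plan is to derive the corollary as a direct consequence of Lemma~\ref{Lem:P5Disconnected+b} combined with the classical asymptotic fact that almost all graphs are connected. Lemma~\ref{Lem:P5Disconnected+b} establishes that every member of the exceptional set $\mathscr{H}$ (the graphs for which $gr_k(P_5 : H) > R_3(H)$) is disconnected. Hence the complement of $\mathscr{H}$ contains every connected graph, so it suffices to quote that connected graphs form an asymptotically density-$1$ class.

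Concretely, I would proceed as follows. First, fix the ``almost all'' language in the labelled sense: let $c_n$ denote the number of connected labelled graphs on $n$ vertices and $g_n = 2^{\binom{n}{2}}$ the total number of labelled graphs on $n$ vertices; a classical result of Gilbert (1959) gives $c_n / g_n \to 1$ as $n \to \infty$. Thus the set of connected graphs has density $1$ within the family of all finite graphs. Second, by Lemma~\ref{Lem:P5Disconnected+b}, if $H$ is connected then $H \notin \mathscr{H}$, i.e.\ $gr_k(P_5 : H) = R_3(H)$. Combining these two facts gives that the proportion of graphs $H$ on $n$ vertices satisfying $gr_k(P_5 : H) = R_3(H)$ tends to $1$ as $n \to \infty$, which is exactly the conclusion of the corollary.

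There is no real obstacle here: the main content of the corollary is contained in Lemma~\ref{Lem:P5Disconnected+b}, and the connectivity density statement is a standard fact requiring only a citation. The only stylistic decision is whether to make the ``almost all'' quantifier explicit (asymptotic density $1$ over labelled graphs of order $n$ as $n \to \infty$) or simply to state that ``every connected graph $H$ satisfies $gr_k(P_5 : H) = R_3(H)$, and almost all graphs are connected, from which the corollary is immediate.'' I would opt for a short two-sentence proof along the latter lines, since the structural work has already been done in Lemma~\ref{Lem:P5Disconnected+b}.
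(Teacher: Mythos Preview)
Your proposal is correct and follows exactly the same approach as the paper: the paper states the corollary as an immediate consequence of Lemma~\ref{Lem:P5Disconnected+b} together with the classical fact that almost all graphs are connected, which is precisely what you do.
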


In order to confirm Conjecture~\ref{Conj:P5Reduction} for more general classes of graphs, we require bounds on the corresponding $2$- and $3$-color Ramsey numbers. As the area of Ramsey Theory develops, more results relating to Conjecture~\ref{Conj:P5Reduction} are likely to become feasible.

Through Lemma~\ref{Lem:CH}, we initiate the discussion of Ramsey numbers of classes of graphs obtained from disconnected graphs by adding edges. While largely unexplored, this area appears to be very fertile for future research.

\begin{problem}
Given a positive integer $k$ and a disconnected graph $H$, find $R_{k}(\ch(H))$.
\end{problem}


\bibliography{../GR-Ref.bib}
\bibliographystyle{plain}

\end{document}